\crefname{hypothesis}{Hypothesis}{Hypotheses}
\title{How to optimize preconditioners for the conjugate gradient 
method: a stochastic approach\thanks{Submitted to the editors 
\funding{This work was funded by RFBR grant 17-01-00854-a}}}
\author{Alexandr Katrutsa\thanks{Skolkovo Institute of Science and Technology 
  (\email{aleksandr.katrutsa@phystech.edu},
  \email{ovgeorge@yandex.ru}, \email{i.oseledets@skoltech.ru}).}
\and Mike Botchev\thanks{Keldysh Institute of Applied Mathematics, Russian Academy of Sciences, Moscow, Russia 
  (\email{botchev@ya.ru}).}
  \and George Ovchinnikov\footnotemark[2]
\and Ivan Oseledets\footnotemark[2]}
\newcommand*{\addFileDependency}[1]{% argument=file name and extension
  \typeout{(#1)}% latexmk will find this if $recorder=0 (however, in that case, it will ignore #1 if it is a .aux or .pdf file etc and it exists! if it doesn't exist, it will appear in the list of dependents regardless)
  \@addtofilelist{#1}% if you want it to appear in \listfiles, not really necessary and latexmk doesn't use this
  \IfFileExists{#1}{}{\typeout{No file #1.}}% latexmk will find this message if #1 doesn't exist (yet)
}
\newcommand{\Gcg}{\mathcal{G}}
\newcommand{\Rr}{\mathbb{R}}
\newcommand{\Rrmm}{\Rr^{m\times m}}
\begin{document}

\maketitle

\begin{abstract}
The conjugate gradient method (CG) is typically used with a preconditioner which improves efficiency and robustness of the method.  
Many preconditioners include parameters and a proper choice of a preconditioner and its parameters is often not a trivial task.  
Although many convergence estimates exist which can be used for
optimizing preconditioners, these estimates typically
hold for all initial guess vectors, in other words,
they reflect the worst convergence rate.  
To account for the mean convergence rate instead,
in this paper, we follow a  stochastic approach. 
It is based on trial runs with random initial guess vectors
and leads to a functional which can be used to monitor
convergence and to optimize preconditioner parameters in CG.
Presented numerical experiments show that optimization of this 
new functional with respect to preconditioner parameters usually 
yields a better parameter value than optimization of the functional based on the spectral condition number. 
\end{abstract}

% REQUIRED
\begin{keywords}
linear system solution, conjugate gradient method, condition number, eigenvalues clustering, relaxed incomplete Cholesky preconditioner,
SSOR preconditioner
\end{keywords}

\begin{AMS}
65F08; 65F10
\end{AMS}

\section{Introduction}
% Updated part (MB)
Preconditioning is an important tool for improving convergence while solving linear systems iteratively~\cite{AxelssonBook,SaadBook2003,Henk:book}.
Efficient preconditioners typically do not only improve the condition number 
of the system matrix but, more importantly, lead to clustering of its eigenvalues.
In nonstationary methods, like the 
conjugate gradient method (CG)~\cite{gvl,hestenes1952methods} or the generalized minimal residual method  (GMRES)~\cite{SaadBook2003,saad1986gmres},
this improved clustering usually manifests in superlinear 
convergence~\cite{RateCG,SupGMRES} (at least in the case of a normal matrix).

Preconditioners may often
include parameters and, although major preconditioner classes have been extensively analyzed~\cite{AxelssonBook,SaadBook2003,Henk:book}, a practical choice of their parameters is not a trivial task.
To optimize preconditioners  parameters in practice, different target functionals can be used~\cite{Benzi2002survey}: the spectral radius~\cite{Varga_book}, 
Ritz values~
% \cite[Chapter~8.4]{Dongarra_NLA_HPC1998}
\cite{Dongarra_NLA_HPC1998}
of the preconditioned matrix, the so-called K-condition 
number~\cite{Kaporin1994,Kaporin2010}, 
a suitable norm of the iteration 
matrix~\cite{BotchevKrukier1997,BotchevGolub2006}, 
closeness in the Frobenius norm of the preconditioned matrix to the 
identity matrix~\cite{Benzi2002survey},
and the trace of the preconditioned
matrix~\cite{Benzi_ea2016}.
All these functionals reflect the convergence behavior of preconditioned 
iterations and have one common feature: 
they are based on certain convergence estimates, which hold for \emph{all} possible initial guess vectors.  
In this sense, they represent a worst-case scenario and, hence, a question arises whether these functionals are adequate for choosing preconditioner  parameters in practice.  
Would, for example, monitoring the \emph{mean} convergence rate be a better option rather than the \emph{worst-case} rate?

This paper presents an attempt to answer this question.
A simple convergence analysis shows
that a faster convergence is observed for a nonempty open set of initial guess vectors.
This suggests that the mean convergence rate can indeed be 
a more adequate convergence measure than the worst-case rate.
Furthermore, we present a stochastic optimization approach based on 
trial runs with random initial guess vectors.  
This leads to an optimization functional which can be used to monitor convergence and to optimize preconditioner parameters in CG. 
We show in numerical experiments that optimization of this new functional with 
respect to preconditioner parameters usually gives a better 
% (i.e., both more efficient and adequate) 
parameter value than optimization of the functional based on the condition number. %, for more details see Section~\ref{sect:numexp}.

This is confirmed in the first numerical test, where we show how the stochastic convergence functional can be
used to optimize 
the well-known relaxed incomplete Cholesky preconditioner 
without fill in, which we denote by $\mathrm{RIC}_{\alpha}(0)$~\cite{axelsson1986eigenvalue,Henk:book,Meurant_book1999}.
This is done for linear systems stemming from
a finite-difference approximation of diffusion problems.
In the second numerical test we consider a structural
mechanics problem solved by CG in combination with the 
SSOR($\omega$) preconditioner.  
Estimates for optimal values of $\omega$ 
in the SSOR preconditioner are well known
but often are expensive to compute~\cite{Templates,gvl,AxelssonBook}.
For instance, this is the case for the optimal value for the SOR 
method (which holds true for consistently ordered matrices
with property $A$~\cite{Young_book})
\begin{equation}
\label{om_opt}
\omega = \dfrac{2}{1+\sqrt{1-\rho_J^2}},
\end{equation}
where $\rho_J$ is the spectral radius of the iteration
matrix of the Jacobi iterative scheme.
This formula might also give a reasonable
suboptimal value for the SSOR preconditioner.  
However, for this test problem the Jacobi iterations
do not converge so that the formula above does
not make sense. 
On the other hand, we demonstrate that the stochastic 
convergence functional is able to provide an optimal
$\omega$ value in this test problem.

We emphasize that the suggested approach is currently of restricted
practical value if just a single linear system has to be solved.
This is because the proposed optimization procedure is based on 
trial runs with many random initial guess vectors and
implies significant computational costs.
However, there are situations where many linear systems with 
the same system matrix have to be solved, e.g. in implicit time integration of large dynamical systems,
and the preconditioner optimization can lead to a 
significant convergence improvement.  
In such cases our approach can be practically useful.

We note that solving many linear systems with the same
system matrix has been an active research 
direction, see~\cite{Amritkar_ea2015,BennerFeng2011}.  
The contribution of our  stochastic optimization approach here is that it can be used in combination with these techniques, leading to further savings of computational costs by an appropriate choice of preconditioner parameters.

Throughout this paper, we assume that a linear system 
\begin{equation}
\label{lin_syst}
Ax = b
\end{equation}
has to be solved for a given symmetric positive definite matrix
$A\in\Rrmm$ and many different right hand side vectors $b\in\Rr^m$. 
The rest of the paper is organized as follows. 
In Section~\ref{sect:functional} the stochastic convergence functional is introduced for stationary linear iterative methods, i.e., for iterations of the form
\begin{equation}
\label{lin_m}
Mx_{k+1} = Nx_k + b,
\end{equation}
with $M-N=A$ and nonsingular $M$.
In the same section, we also introduce a similar convergence functional for 
nonstationary nonlinear iterations such as CG.
A question whether the proposed convergence functional provides a convergence measure different than a classical convergence estimate based on the spectral condition number is discussed in Section~\ref{sect:conv}.
There we show that an open set of initial guess vectors 
exists for which CG converges faster than predicted by the classical estimate.  
This confirms that our proposed convergence functional is an essentially different convergence measure than the classical one.
Finally, numerical experiments are presented in Section~\ref{sect:numexp} and conclusions are drawn in Section~\ref{sect:concl}.

% Preconditioning, as name suggests, is a technique of low computational cost
% for improving conditional number 
% of matrix. It plays important role in many problems and algorithms of numerical linear algebra, among those are iterative methods for solving linear systems. 
% There are many preconditioners build around different ideas: incomplete 
% matrix factorizations, approximation of inverse matrix with matrix polynomials and so on. 
% It is well-known, that conjugate gradients method works best when either of the two properties hold: either matrix 
% has a small number of clustered eigenvalues or well-conditioned \cite{gvl}.
% There is an issue with accurate and fast estimations of condition number 
% and in case of CG we have another preconditioning strategies which efficiency
% could be better measured with different functional. Additionally CG converges
% faster than worst-case estimate suggests because initial guess 
% could be almost orthogonal to eigenvector corresponding to the larges eigenvalue (it is said, that this eigenvalue is not active). 

% All of this leads us to the idea that we can use another functional to 
% estimate quality of preconditioner. 

\section{Mean convergence rate}
\label{sect:functional}
Iterative solvers for linear systems are well  
studied~\cite{AxelssonBook,gvl,greenbaum1997iterative,Meurant_book2006,SaadBook2003,Henk:book}
and many classical convergence estimates are available.
A~convergence estimate typically has the form 
\begin{equation}
\label{newconv:upper}
\Vert x_* - x_k \Vert_* \leq C q_A^k \Vert x_* - x_0 \Vert_*,
\end{equation}
where $x_*$ is the exact solution vector of~\eqref{lin_syst}, $x_k$ is the $k$-th iterand of the method, $\|\cdot\|_*$ is some vector norm, $C>0$ is a constant and $q_A>0$ is a constant depending on the matrix $A$. 
The estimate~\eqref{newconv:upper} is a worst-case estimate among \emph{all} initial guess vectors $x_0$, whereas it is quite natural to study \emph{mean convergence rate} of a given iterative method instead. 

\begin{definition}
Let mean convergence rate be upper estimation of the norm $\| x_* - x_k \|_*$ averaged over all possible initial guess vectors $x_0$, i.e $\mathbb{E}_{x_0} (\Vert x_* - x_k \Vert_*^2)$, where $x_0$ is generated from some given distribution.
\end{definition}

In this paper we want to study this approach which, as far as we know, has not been explored in this way. 
We consider the initial error vector $x_*-x_0$ to be a \emph{random vector} with independent and identically distributed (i.i.d.) entries with a mean of 0 and a standard deviation 
of 1, i.e., in $N(0, 1)$. 
Then the error $x_* - x_k$ is also a random vector, and we can define its expectation $e_k$ by
\begin{equation}
\label{newconv:exp}
e_k^2 = \mathbb{E} (\Vert x_* - x_k \Vert_*^2),
\end{equation}
where expectation is taken over the distribution of $x_* - x_k$. 
In general one can not expect that the entries of $x_* - x_k$ are distributed normally; their distribution is unknown since the vector $x_* - x_k$ results from a nonlinear CG process\footnote{In Section~\ref{sec::gf} we consider non-normally distributed $x_* - x_0$.}.
Therefore we further use a computable unbiased estimation~\eqref{Fs} of the expectation in~\eqref{newconv:exp}.
A question arises whether an estimate of the form
\begin{equation}
\label{newconv:exp2}
   e_k \sim C \mu_A^k
\end{equation}
can be obtained,
where $\mu_A>0$ is a constant which depends on $A$ and
determines the convergence rate.
We should be careful while giving a meaning to the \emph{asymptotic} behavior 
in~\eqref{newconv:exp}, since for some methods (e.g., CG) in exact arithmetic we have 
convergence after $m$ iterations. Nevertheless, for large system dimension $m$ the estimates of the form~\eqref{newconv:exp} are of interest and provide useful information about convergence. 

First, the important special case is the stationary linear iterative method~\eqref{lin_m},
\begin{equation}
    x_*- x_{k+1} = G (x_* - x_k), \qquad G = M^{-1}N, 
    \label{eq::lin_iter_stat}
\end{equation}
% M is typically used for preconditioner, with x_{k+1} = M^{-1} N x_k
where $G$ is the iteration matrix and $M-N=A$.
Then, using the classical Hutchinson result on stochastic 
trace estimator~\cite{Hutchinson}
and denoting $d_k=x_*-x_k$, $k\geqslant 0$,
we obtain in the 2-norm
\[
e_k^2 = \mathbb{E}\left( G^k d_0, G^k d_0\right) = \mathbb{E} \left((G^k)^{\top} G^k  d_0, d_0 \right) = \mathrm{Tr}((G^k)^{\top} G^k) = ||G^k||^2_F,
\]
where $\|\cdot\|_F$ is the Frobenius matrix norm 
and $\mathrm{Tr}$ denotes the trace of a matrix. 
Due to Gelfand's formula
\[
\lim_{k\rightarrow \infty} \Vert G^k \Vert_*^{1/k} = \rho(G),
\]
where $\|\cdot\|_*$ is any norm\footnote{The norm
does not have to be an operator norm (i.e., induced by a vector norm) 
but if so the limit is 
approached from above.} and $\rho(G)$ denotes the spectral radius of $G$, 
we have
\begin{equation}
\label{e=rho}
e_k \sim \rho(G)^k,
\end{equation}
i.e., in this case the worst-case rate is also the mean rate~\cite{katrutsa2017deep}.

For nonlinear nonstationary iterative methods, such as CG or MINRES, similar analysis appears to be quite complicatedand is left beyond the scope of this paper. 
However, we find experimentally, by Monte-Carlo simulations and fitting estimated convergence rates, that the situation is completely different, i.e., the worst-case rates are significantly larger than the estimated mean convergence rates. 
% The goal of this paper is to study $\mu(A)$ for different matrices and iterative methods, and also provide some upper bounds for it.\todo{We consider only CG}
Since analytical expressions for the mean convergence  rate are not available, we can follow a practical approach and try to derive a computable measure of convergence similar to~\eqref{e=rho}.
As $k$~iterations of a stationary linear method~\eqref{lin_m} are carried out through $k$~matrix-vector multiplication with the iteration matrix~$G$, similarly, $k$~iterations of a nonstationary method can be seen as an action of some nonlinear mapping (defining the method) $k$~times. 

Then, a straightforward, practical way to monitor convergence of the method is to perform $k$ iterations for a number of random initial
guess vectors $x_0^{(i)}$.  Indeed, let us define 
a stochastic convergence functional
\begin{equation}
\label{Fs}
F_s \equiv \frac1{n}
\sum_{i=1}^{n}\|x_* - x_k^{(i)}\|_*,
\end{equation}
where $n$ is the number of random initial guess vectors and $x_k^{(i)}$ is the $k$-th iterand of the method started at initial guess vector $x_0^{(i)}$.
% Although for CG the functional $F_s$ is computable if the $A$-norm is used, 
% in practice one may just set $x_*$ to zero, which means that a linear system
% with the right hand side $b=0$ is solved.  In this 
% paper in all cases $F_s$ is computed for $x_*=0$ and the 2-norm.
Note that, analogously to~\eqref{eq::lin_iter_stat}, we can write
\begin{equation*}
F_s = \frac{1}{n} \sum_{i=1}^{n}\|x_* - x_k^{(i)}\|_*
= \frac{1}{n}\sum_{i=1}^{n}\|\Gcg^k(x_* - x_0^{(i)})\|_*
\approx \mathbb{E}_{x_0} \|\Gcg^k(x_* - x_0)\|_*,
\end{equation*}
where $\Gcg$ is a non-linear mapping corresponding to one iteration of CG and $\Gcg^k$ denotes the mapping applied $k$~times.

A possible application of the introduced concept of the mean convergence rate is optimization of the preconditioner parameters in preconditioned CG method to get faster convergence.
To perform optimization we introduce in~\eqref{Fs} a preconditioner  parameter~$\alpha$ since $x_k^{(i)}$ is obtained after $k$ iterations of the preconditioned CG and depends on~$\alpha$. 
Hence, the mapping $\Gcg$ depends on the preconditioner parameter and, formally speaking, we optimize the functional 
\[
F_s(\alpha) = \frac{1}{n}\sum_{i=1}^{n}\|\Gcg^k(x_* - x_0^{(i)}, \alpha)\|_*
\approx \mathbb{E}_{x_0} \|\Gcg^k(x_* - x_0, \alpha)\|_*.
\]
This means that we actually optimize the functional~\eqref{Fs} with respect to the parameter $\alpha$.
Evidently, such an optimization process based on many trial runs is expensive. 
One trial run means carrying out
$k\cdot n$ iterations of the method.
These costs are only paid off if the optimized preconditioner
is to be used in many iterations, for example, if many 
linear systems with the same matrix and different right
hand sides have to be solved.  This is why
we make this assumption while introducing~\eqref{lin_syst}.
More details on practical optimization of functional $F_s$ can be found in Section~\ref{sec::opt_cost}.

Since for stationary iterations the stochastic convergence functional 
appears to be identical to a classical convergence measure i.e., the spectral radius of the iteration matrix,
a question arises whether our stochastic convergence functional~$F_s$ does
not coincides with some known convergence measure
for nonstationary iterations.
In the next section we study this question for the CG method.
A well-known classical convergence estimate for 
CG, see e.g.~\cite{AxelssonBook,gvl,Meurant_book2006,SaadBook2003,Henk:book}, 
is based
on the condition number $\kappa$ of the system matrix $A$:
\begin{equation}
\label{class_est}
\|x_*-x_k\|_A \leqslant
2\left(\dfrac{\sqrt{\kappa}-1}{\sqrt{\kappa}+1}\right)^k
\|x_*-x_0\|_A.
\end{equation}
Although this estimate can in general be pessimistic, as it does not reflect
the often observed superlinear convergence of CG~\cite{RateCG,axelsson1986eigenvalue},  
it can still be used for monitoring the convergence rate of CG.
%
% MB: see my email message from June 12, 19:37.
% Also, estimates of superlinear convergence of CG is often impractical to measure quality of preconditioner, because of high computational costs and strong assumptions on eigenvalues distribution~\cite{todo} \todo{Is it OK about superlinear estimates?}.
Hence, together with~\eqref{Fs}, we consider the corresponding classical 
convergence functional,~i.e.,
\begin{equation}
\label{Fc}
F_c\equiv \left(\dfrac{\sqrt{\kappa}-1}{\sqrt{\kappa}+1}\right)^k.
\end{equation}
Note that~\eqref{class_est} can be improved if a clustering of the eigenvalues
is assumed~\cite{axelsson1986eigenvalue,SaadBook2003,RateCG}.
In the next section we show that even if no assumptions
on eigenvalue clustering are made, 
there is an open set of initial guess vectors for which 
CG exhibits a faster convergence than predicted by the classical estimate~\eqref{class_est}.
This implies that our stochastic functional~\eqref{Fs} is essentially different than the classical convergence functional~\eqref{Fc}.

\section{Initial guess vectors and convergence of CG}
\label{sect:conv}
Analysis in this section is inspired by results of~\cite{RateCG}. 
Throughout this section we neglect the round off errors.
Let $A$ be a symmetric positive definite $m\times m$ matrix, $z_1$, \dots, $z_m$ be orthonormal eigenvectors of $A$, and $0<\lambda_1\leqslant\dots\leqslant\lambda_m$ be the corresponding eigenvalues.
For simplicity of notation throughout this section we omit the subscript $\cdot_*$ in 
the exact solution vector $x_*$.
\newcommand{\polset}[1]{\Pi_{#1}^{\text{ct1}}}
For the CG iterands $x_j$, $j=1,2,\dots$, the optimality property reads
\begin{equation}
\label{opt1}
\|x-x_j\|_A = \min_{q\in\polset{j}}\|q(A)(x-x_0)\|_A,  
\end{equation}
where $\polset{j}$ denotes the set of all polynomials of degree
at most $j$ with the constant term~$1$.
Let $q$ be the CG residual polynomial (i.e., the polynomial
at which the minimum in~\eqref{opt1} is attained),
and let
\[
x-x_0 = \sum_{i=1}^m\gamma_i z_i.
\]
It is easy to check that the optimality property~\eqref{opt1}
can be rewritten as
\begin{equation}
\label{est0}
\|x-x_j\|_A^2 = \sum_{i=1}^m \lambda_i(\gamma_iq(\lambda_i))^2
\leqslant \sum_{i=1}^m \lambda_i(\gamma_i\tilde{q} (\lambda_i))^2, \qquad
\forall \tilde{q}\in\polset{j}.
\end{equation}
Moreover, we have
\begin{equation}
\label{respol}
q(t) = \dfrac{(\theta_1-t)\dots(\theta_j-t)}{\theta_1\dots\theta_j},
\end{equation}
where the roots $\theta_1$, \dots, $\theta_j$ of $q(t)$ are the Ritz values
of the CG process at the $j$-th iteration.  

Furthermore, consider $\bar{x}_0$ chosen such that
\begin{equation}
\label{x0}
x-\bar{x}_0 = \sum_{i=2}^m\gamma_i z_i,
\end{equation}
and denote by $\bar{q}(t)$ the CG residual polynomial of the
CG iterations with $\bar{x}_0$ taken as the initial guess.
Similarly to~\eqref{respol}, it holds
\[
\bar{q}(t) = \dfrac{(\bar{\theta}_1-t)\dots(\bar{\theta}_j-t)}{\bar{\theta}_1\dots\bar{\theta}_j},
\]
with $\bar{\theta}_1$, \dots, $\bar{\theta}_j$ being the Ritz values of the CG process
started at $\bar{x}_0$.
Note that by taking in~\eqref{est0} the polynomial $\tilde{q}(t)$ as the Chebyshev
minimax polynomial on the interval $[\lambda_1,\lambda_m]$, we obtain
the classical convergence estimate:
\begin{equation}
\label{est1}
\begin{aligned}
\|x-x_j\|_A^2 &\leqslant \sum_{i=1}^m \lambda_i(\gamma_i\tilde{q} (\lambda_i))^2
\\
&\leqslant\max_{i}\tilde{q}(\lambda_i)^2\sum_{i=1}^m \lambda_i\gamma_i^2
\leqslant\max_{\lambda\in[\lambda_1,\lambda_m]}\tilde{q}(\lambda)^2\sum_{i=1}^m \lambda_i\gamma_i^2
\\
&= \max_{\lambda\in[\lambda_1,\lambda_m]}\tilde{q}(\lambda)^2\cdot\|x-x_0\|_A^2
= 4C_1^{2j}\|x-x_0\|_A^2,
\end{aligned}
\end{equation}
where
\[
C_1 = \dfrac{\sqrt{\kappa_1}-1}{\sqrt{\kappa_1}+1}, \qquad \kappa_1 = \dfrac{\lambda_m}{\lambda_1}.
\]
For the CG process started at $\bar{x}_0$ the corresponding convergence estimate reads
\begin{equation}
\label{est2}
\|x-\bar{x}_j\|_A^2 \leqslant 4C_2^{2j}\|x-\bar{x}_0\|_A^2,\qquad
C_2 = \dfrac{\sqrt{\kappa_2}-1}{\sqrt{\kappa_2}+1}, \qquad \kappa_2 = \dfrac{\lambda_m}{\lambda_2}.
\end{equation}

\begin{theorem}
\label{T1}
Let the initial guess vector $x_0$ in the CG process be chosen
such that the first component $\gamma_1$ of the initial error $x-x_0$
is small with respect to the other components;
more precisely, let there
exist a constant $\delta>0$ such that
\begin{equation}
  \label{small_g1}
  \dfrac{\lambda_1\gamma_1^2}{\displaystyle\sum_{i=2}^m\lambda_i\gamma_i^2}
  =
  \dfrac{\lambda_1\gamma_1^2}{\|x-\bar{x}_0\|_A^2}
  \leqslant 4 C_2^{2j} \delta, \quad \text{for } j=1,\dots,J,
\end{equation}
where $x_0$ is defined in~\eqref{x0}.
Then convergence of the CG process in the first $J$ iterations
is determined by the constant $C_2$ rather than by $C_1$
(cf.~\eqref{est1},\eqref{est2}) in the sense that
\begin{equation}
\label{est3}
\|x-x_j\|_A^2 \leqslant
4(1+\delta)C_2^{2j}\|x-x_0\|_A^2,
\quad \text{for } j=1,\dots,J.
\end{equation}
\end{theorem}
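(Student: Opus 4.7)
The plan is to exploit the optimality property~\eqref{est0} of CG by choosing a cleverer test polynomial than the Chebyshev minimax polynomial on $[\lambda_1,\lambda_m]$ used in the classical estimate~\eqref{est1}. Since the hypothesis~\eqref{small_g1} says the $\gamma_1$-component of the initial error is tiny, we should instead pick $\tilde{q}\in\polset{j}$ to be the Chebyshev minimax polynomial on the shorter interval $[\lambda_2,\lambda_m]$, normalized so that $\tilde{q}(0)=1$. Concretely,
\[
\tilde{q}(t) = \frac{T_j\!\left(\frac{\lambda_m+\lambda_2-2t}{\lambda_m-\lambda_2}\right)}{T_j\!\left(\frac{\lambda_m+\lambda_2}{\lambda_m-\lambda_2}\right)}.
\]
By the standard Chebyshev bound, this polynomial satisfies $|\tilde{q}(t)|\leqslant 2C_2^{j}$ for all $t\in[\lambda_2,\lambda_m]$, so in particular $\tilde{q}(\lambda_i)^2\leqslant 4C_2^{2j}$ for $i=2,\dots,m$.

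Next, I would split the bound in~\eqref{est0} into the $i=1$ term and the sum over $i\geqslant 2$:
\[
\|x-x_j\|_A^2 \;\leqslant\; \lambda_1\gamma_1^2\,\tilde{q}(\lambda_1)^2 \;+\; \sum_{i=2}^m \lambda_i\gamma_i^2\,\tilde{q}(\lambda_i)^2.
\]
The second sum is immediately bounded by $4C_2^{2j}\sum_{i=2}^m\lambda_i\gamma_i^2 = 4C_2^{2j}\|x-\bar{x}_0\|_A^2$. For the first term, I would invoke the hypothesis~\eqref{small_g1} in the form $\lambda_1\gamma_1^2\leqslant 4C_2^{2j}\delta\,\|x-\bar{x}_0\|_A^2$. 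Adding the two contributions and then using the trivial identity $\|x-x_0\|_A^2=\lambda_1\gamma_1^2+\|x-\bar{x}_0\|_A^2\geqslant\|x-\bar{x}_0\|_A^2$ yields~\eqref{est3}.

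The only delicate step in the argument is estimating $\tilde{q}(\lambda_1)$, since $\lambda_1$ lies to the left of the Chebyshev interval $[\lambda_2,\lambda_m]$, where the Chebyshev polynomial grows rapidly. The key observation, which I would justify carefully, is that for $t\in[0,\lambda_2]$ the affine argument $\phi(t)=(\lambda_m+\lambda_2-2t)/(\lambda_m-\lambda_2)$ lies in the interval $[1,\phi(0)]$ on which $T_j$ is positive and monotonically increasing from $1$ to $T_j(\phi(0))$. Hence $1\leqslant T_j(\phi(\lambda_1))\leqslant T_j(\phi(0))$, so $0\leqslant \tilde{q}(\lambda_1)\leqslant 1$, and in particular $\tilde{q}(\lambda_1)^2\leqslant 1$. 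This makes the contribution of the $i=1$ term harmless and lets the hypothesis take over. This monotonicity/positivity check for $T_j$ outside $[-1,1]$ is really the only substantive obstacle; everything else is the standard CG optimality argument combined with a bookkeeping splitting of the error expansion.
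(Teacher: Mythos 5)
Your proof is correct, and it takes a genuinely (if modestly) different route from the paper's. The paper also splits off the $i=1$ term in the optimality bound~\eqref{est0}, but its comparison polynomial is not the explicit Chebyshev polynomial on $[\lambda_2,\lambda_m]$: it takes $\tilde{q}=\bar{q}$, the residual polynomial of the auxiliary CG process started at $\bar{x}_0$, identifies $\sum_{i\geqslant 2}\lambda_i(\gamma_i\bar{q}(\lambda_i))^2 = \|x-\bar{x}_j\|_A^2$, and then bounds that by the classical estimate~\eqref{est2} for the auxiliary process; the bound $0\leqslant\bar{q}(\lambda_1)\leqslant 1$ is justified there by $\bar{q}(0)=1$ together with monotone non-increase of $\bar{q}$ on $[0,\lambda_2]$, which rests on the localization of the auxiliary Ritz values in $[\lambda_2,\lambda_m]$. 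You instead inline the Chebyshev polynomial that underlies~\eqref{est2} directly into~\eqref{est0}, so you invoke CG optimality only once and replace the Ritz-value argument by the explicit monotonicity of $T_j$ on $[1,\infty)$; your check that $\phi(\lambda_1)\in[1,\phi(0)]$ with $T_j(1)=1$ gives $0<\tilde{q}(\lambda_1)\leqslant 1$ is exactly right, as is the standard bound $|\tilde{q}(t)|\leqslant 2C_2^j$ on $[\lambda_2,\lambda_m]$, and your final chain (hypothesis~\eqref{small_g1}, then $\|x-\bar{x}_0\|_A\leqslant\|x-x_0\|_A$) coincides with the paper's. What your version buys is self-containedness: no auxiliary CG process and no facts about its Ritz values are needed. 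What the paper's version buys is a tie to the actual comparison process: the intermediate quantity $\|x-\bar{x}_j\|_A$ can be sharper than the Chebyshev bound (e.g., in superlinear phases), and $\gamma_1\bar{q}(\lambda_1)$ is precisely the quantity monitored in Figures~\ref{fig1} and~\ref{fig2}, so the paper's proof doubles as the explanation of those plots.
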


\begin{proof}
Choosing in~\eqref{est0} the polynomial~$\tilde{q}(t)$ as the residual
polynomial~$\bar{q}(t)$ of the CG process started at~$\bar{x}_0$, we
have
\begin{align*}
\|x-x_j\|_A^2
&=
\sum_{i=1}^m\lambda_i(\gamma_i q(\lambda_i))^2
\leqslant
\sum_{i=1}^m\lambda_i(\gamma_i\bar{q}(\lambda_i))^2
\\
&=
\lambda_1(\gamma_1\bar{q}(\lambda_1))^2
+\sum_{i=2}^m\lambda_i(\gamma_i\bar{q}(\lambda_i))^2
\\
&=\lambda_1(\gamma_1\bar{q}(\lambda_1))^2
+\|x-\bar{x}_j\|_A^2
\leqslant\lambda_1(\gamma_1\bar{q}(\lambda_1))^2
+4C_2^{2j}\|x-\bar{x}_0\|_A^2
\\
&\leqslant\lambda_1\gamma_1^2
+4C_2^{2j}\|x-\bar{x}_0\|_A^2,
\end{align*}
where the last inequality holds because
\[
0\leqslant \bar{q}(t)\leqslant 1,
\]
which is true since $\bar{q}(t)$ is
monotonically non-increasing on the interval $[0,\lambda_2]$
and $\bar{q}(0)=1$.
Finally, we use assumption~\eqref{small_g1} and obtain
\begin{align*}
\|x-x_j\|_A^2
&\leqslant
\lambda_1\gamma_1^2 +4C_2^{2j}\|x-\bar{x}_0\|_A^2
\\
&\leqslant
\delta 4C_2^{2j}\|x-\bar{x}_0\|_A^2
+
4C_2^{2j}\|x-\bar{x}_0\|_A^2
\leqslant
4(1+\delta)C_2^{2j}\|x-x_0\|_A^2.
\end{align*}
This ends the proof.
\end{proof}

% -------------------------------
It is not difficult to see that the last theorem can be generalized
for the case where several first components of the initial error
are small with respect to the other error components.  Indeed, denote
\begin{equation}
\label{Cs}
C_s = \dfrac{\sqrt{\kappa_s}-1}{\sqrt{\kappa_s}+1}, \qquad
\kappa_s = \dfrac{\lambda_m}{\lambda_s}.  
\end{equation}
Then the following result holds.

\begin{theorem}
\label{T2}
Let the initial guess vector $x_0$ in the CG process be chosen
such that the first $s-1$ components $\gamma_1$, \dots, $\gamma_{s-1}$
of the initial error $x-x_0$ are small with respect to the other components;
more precisely, let there
exist a constant $\delta>0$ such that
\begin{equation}
  \label{small_gs}
  \dfrac{\displaystyle\sum_{i=1}^{s-1}\lambda_i\gamma_i^2}
        {\displaystyle\sum_{i=s}^m\lambda_i\gamma_i^2}
  =
  \dfrac{\displaystyle\sum_{i=1}^{s-1}\lambda_i\gamma_i^2}
        {\|x-\bar{x}_0\|_A^2}
  \leqslant 4 C_s^{2j} \delta, \quad \text{for } j=1,\dots,J,
\end{equation}
where
\begin{equation}
\label{x0a}
x-\bar{x}_0 = \sum_{i=s}^m\gamma_i z_i,
\end{equation}
Then convergence of the CG process in the first $J$ iterations
is determined by the constant $C_s$ rather than by $C_1$
(cf.~\eqref{est1}) in the sense that
\begin{equation}
\label{est4}
\|x-x_j\|_A^2 \leqslant
4(1+\delta)C_s^{2j}\|x-x_0\|_A^2,
\quad \text{for } j=1,\dots,J.
\end{equation}
\end{theorem}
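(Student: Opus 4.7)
The plan is to mirror the proof of Theorem~\ref{T1} almost line-for-line, replacing the single suppressed eigendirection $z_1$ by the block $z_1,\dots,z_{s-1}$ and letting $\bar{q}$ be the CG residual polynomial of the process started at the truncated initial guess $\bar{x}_0$ defined in~\eqref{x0a}. I would apply the optimality property~\eqref{est0} with $\tilde q=\bar q$, split the resulting sum into the ``suppressed'' indices $i=1,\dots,s-1$ and the ``active'' indices $i=s,\dots,m$, and recognize the latter as $\|x-\bar{x}_j\|_A^2$, which is bounded by $4C_s^{2j}\|x-\bar{x}_0\|_A^2$ via the classical estimate~\eqref{est2} adapted with the exponent~$\kappa_s$ of~\eqref{Cs}.

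The key technical point, and the one I would treat carefully, is the bound $0\leqslant \bar{q}(\lambda_i)\leqslant 1$ for $i=1,\dots,s-1$. This follows from the fact that the Krylov subspace generated by the residual of $\bar{x}_0$ lies entirely in $\mathrm{span}(z_s,\dots,z_m)$, which is $A$-invariant; consequently the Ritz values $\bar{\theta}_1,\dots,\bar{\theta}_j$ (the roots of $\bar{q}$) are eigenvalues of the compression of $A$ to this subspace and therefore lie in $[\lambda_s,\lambda_m]$. Since $\bar{q}(0)=1$ and $\bar{q}$ is a product of linear factors $(\bar{\theta}_\ell-t)/\bar{\theta}_\ell$ with all roots $\geqslant \lambda_s\geqslant \lambda_{s-1}$, each factor is in $[0,1]$ for $t\in[0,\lambda_{s-1}]$, giving the desired pointwise bound on $\bar{q}$ at $\lambda_1,\dots,\lambda_{s-1}$.

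Putting the pieces together yields
\[
\|x-x_j\|_A^2 \leqslant \sum_{i=1}^{s-1}\lambda_i\gamma_i^2\bar{q}(\lambda_i)^2 + \|x-\bar{x}_j\|_A^2
\leqslant \sum_{i=1}^{s-1}\lambda_i\gamma_i^2 + 4C_s^{2j}\|x-\bar{x}_0\|_A^2,
\]
after which the smallness assumption~\eqref{small_gs} turns the first term into $4\delta C_s^{2j}\|x-\bar{x}_0\|_A^2$, producing the factor $4(1+\delta)C_s^{2j}\|x-\bar{x}_0\|_A^2$. Finally I would upgrade $\|x-\bar{x}_0\|_A$ to $\|x-x_0\|_A$ using the obvious inequality $\|x-\bar{x}_0\|_A^2 = \sum_{i=s}^m\lambda_i\gamma_i^2 \leqslant \sum_{i=1}^m\lambda_i\gamma_i^2 = \|x-x_0\|_A^2$, arriving at~\eqref{est4}.

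The main obstacle, and really the only step that requires more than bookkeeping, is justifying that the Ritz values of the auxiliary CG process inherit the bound $\bar{\theta}_\ell\geqslant \lambda_s$; everything else is an essentially mechanical generalization of the $s=2$ case proved for Theorem~\ref{T1}.
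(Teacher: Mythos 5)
Your proposal is correct and follows essentially the same route as the paper, which proves Theorem~\ref{T2} exactly by repeating the Theorem~\ref{T1} argument with $\tilde q=\bar q$ in~\eqref{est0}, the split at index $s$, the Chebyshev bound with $C_s$, and the assumption~\eqref{small_gs}. In fact you are slightly more careful than the paper: your justification that the Ritz values $\bar\theta_\ell$ lie in $[\lambda_s,\lambda_m]$ (via the $A$-invariance of $\mathrm{span}(z_s,\dots,z_m)$), which yields $0\leqslant\bar q(t)\leqslant 1$ on $[0,\lambda_{s-1}]$, is exactly the detail the paper leaves implicit behind its monotonicity claim in the proof of Theorem~\ref{T1}.
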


\begin{proof}
The proof is analogous to the proof of Theorem~\ref{T1}.
We take in~\eqref{est0} polynomial~$\tilde{q}(t)$ as the residual
polynomial~$\bar{q}(t)$ of the CG process started at~$\bar{x}_0$
defined in~\eqref{x0a}.  Using the convergence estimate
$$
\|x-\bar{x}_j\|_A^2 \leqslant 4C_s^{2j}\|x-\bar{x}_0\|_A^2,
$$
which holds for this CG process, and the assumption~\eqref{small_gs},
we arrive at~\eqref{est4}.
\end{proof}

\begin{figure}
\centering
\includegraphics[width=0.6\linewidth]{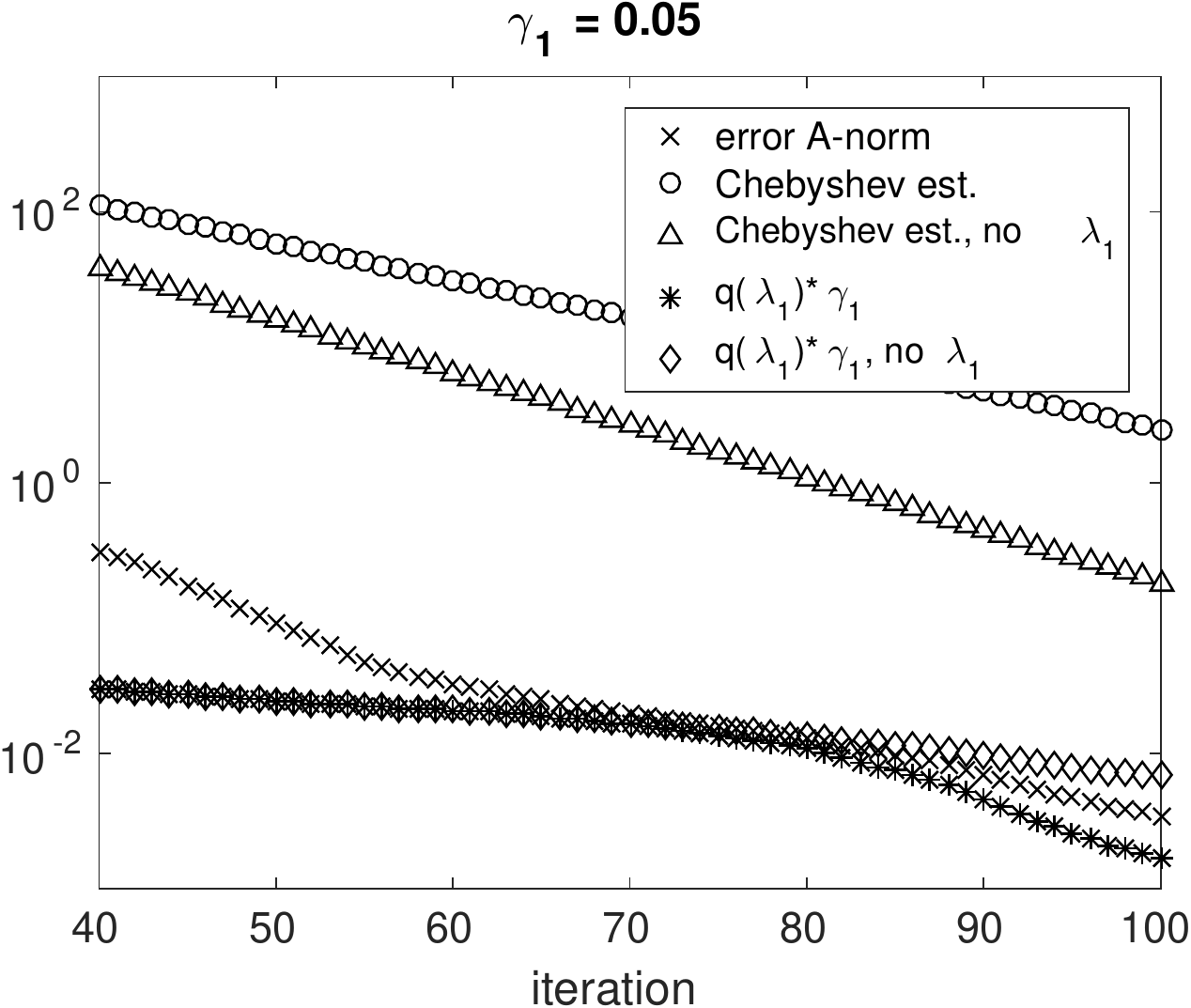}
\\[2ex]
\includegraphics[width=0.6\linewidth]{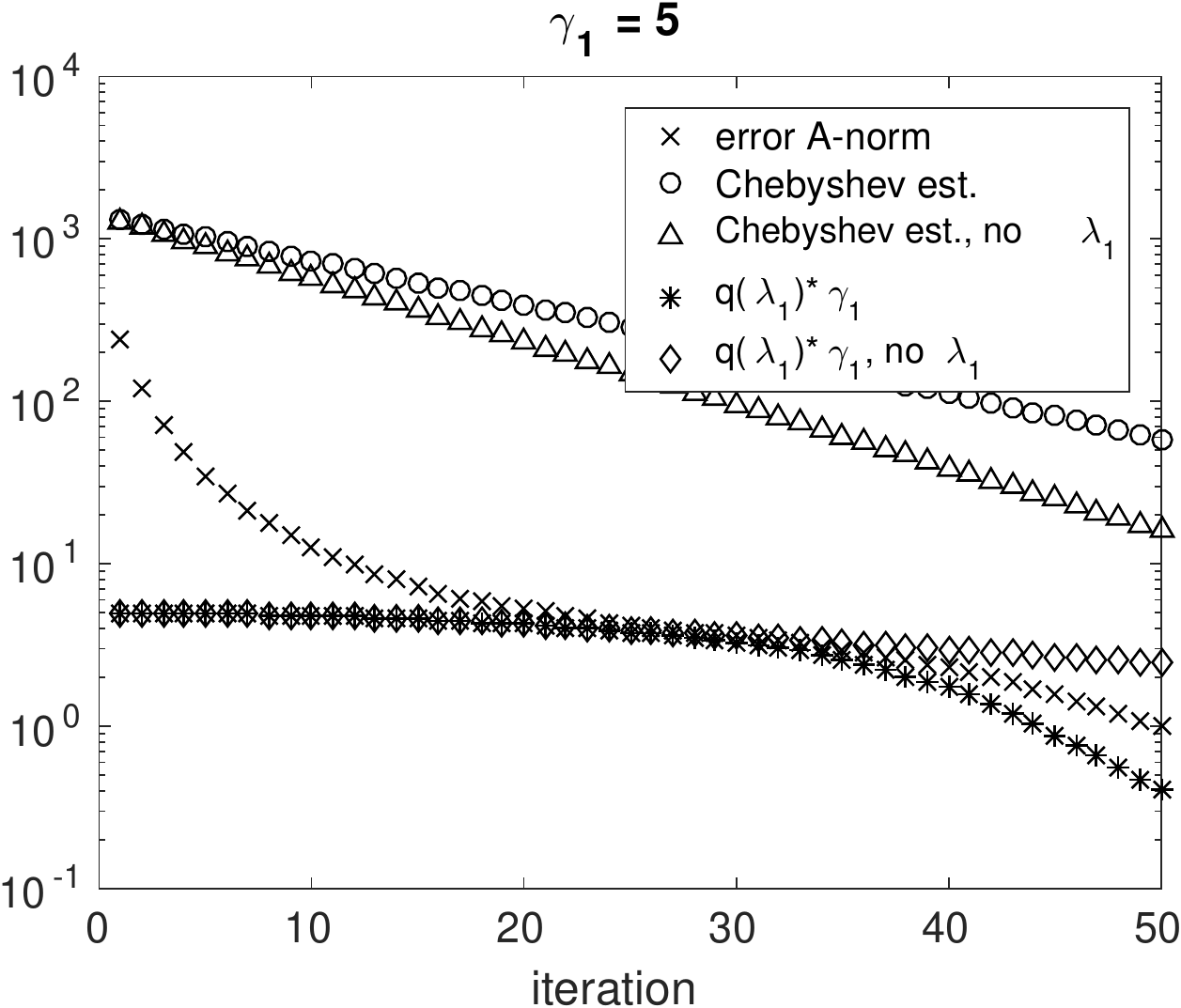}
\caption{CG convergence for the initial guess vectors with $\gamma_1=0.05$ (top) and $\gamma_1=5$ (bottom):
  the error norm $\|x-x_j\|_A$ (the $\times$ curve),
  estimate $2C_1^j$ (the $\circ$ line), estimate $2C_2^j$ (the $\triangle$ line),
and the values $\gamma_1q(\lambda_1)$ and 
$\gamma_1\bar{q}(\lambda_1)$ (the $*$ and $\diamond$ curves, respectively).}
\label{fig1}  
\end{figure}

Theorems~\ref{T1},~\ref{T2} can be illustrated by the following numerical test.
Let $A$ be a diagonal matrix of dimension $m=1000$, with the diagonal entries
\[
1, 2, 3, \dots, 1000.
\]
Let, furthermore, the right hand side vector $b$ be taken such that the exact 
solution vector has all its components one.  
The initial guess vector $x_0$ is chosen such all the components of $x-x_0$ except the first one are 
i.i.d.\ and in $N (0, 1)$ (independent
normally distributed random values with zero mean and variance one).
The first entry of $x-x_0$ is set to $\gamma_1$.

\begin{figure}
\centering
\includegraphics[width=0.6\linewidth]{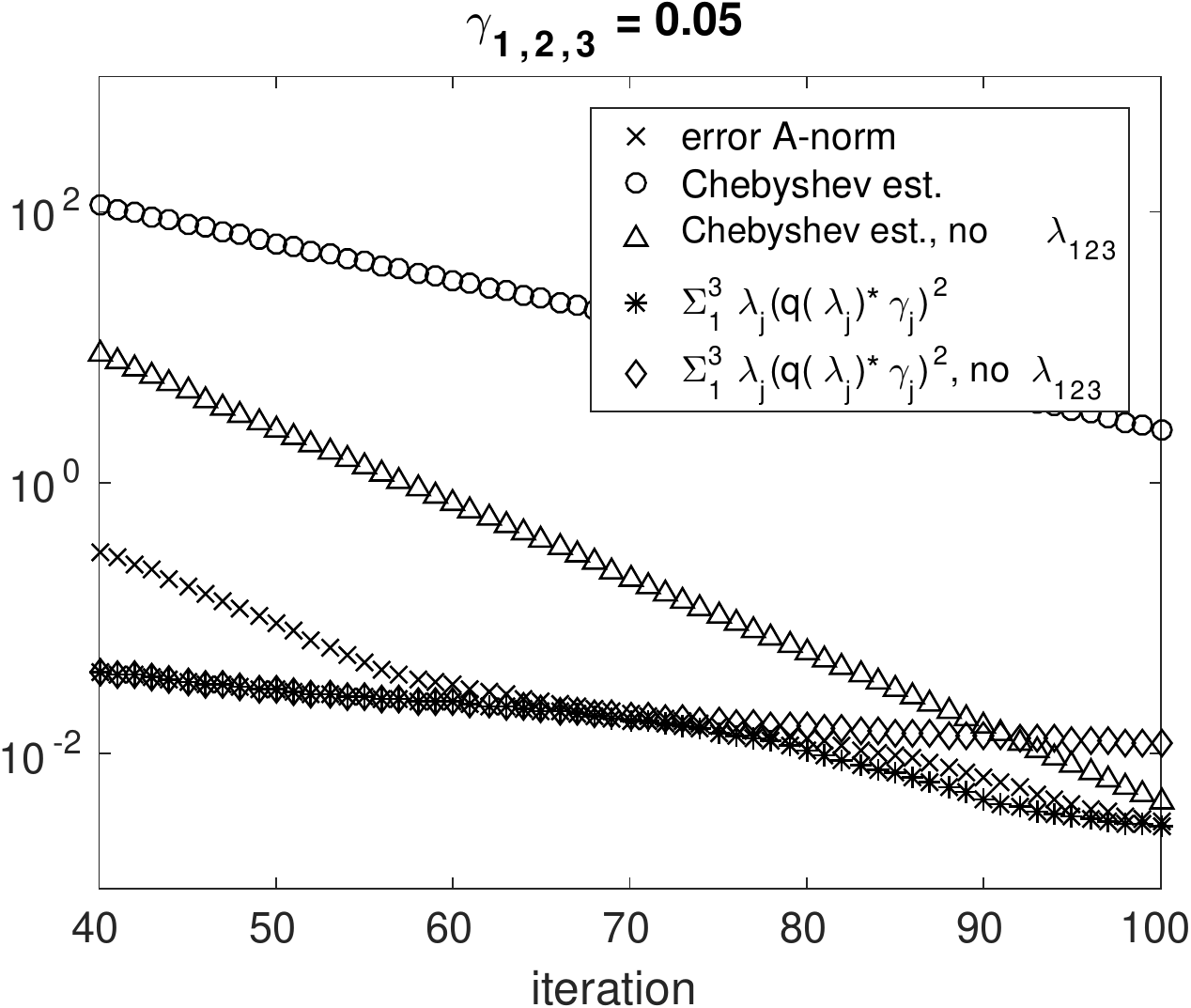}
\\[2ex]
\includegraphics[width=0.6\linewidth]{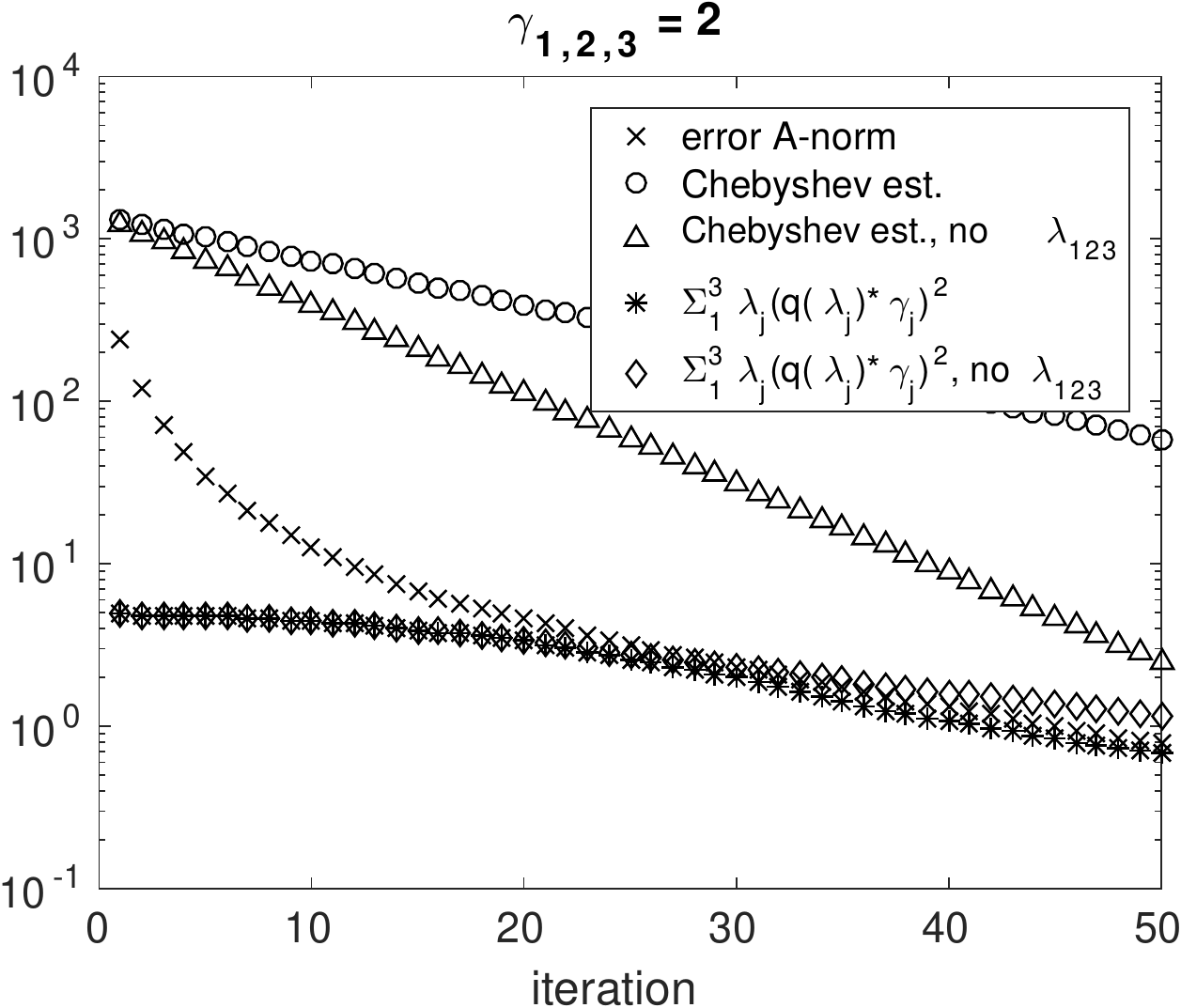}
\caption{CG convergence for the initial guess vectors with 
$\gamma_1=\gamma_2=\gamma_3=0.05$ (top) and
$\gamma_1=\gamma_2=\gamma_3=2$ (bottom):
the error norm $\|x-x_j\|_A$ (the $\times$ curve),
estimate $2C_1^j$ (the $\circ$ line), estimate $2C_2^j$ (the $\triangle$ line),
and the values $\gamma_1q(\lambda_1)$ and 
$\gamma_1\bar{q}(\lambda_1)$ (the $*$ and $\diamond$ curves, respectively).}
\label{fig2}  
\end{figure}
 
In Figure~\ref{fig1} the CG error convergence is plotted, together with the Chebyshev bounds~\eqref{est1}, \eqref{est2} and the values $\gamma_1q(\lambda_1)$, $\gamma_1\bar{q}(\lambda_1)$.
As we see, at first iterations (approximately until iteration~$75$ for
$\gamma_1=0.05$ and iteration~$25$ for $\gamma_1=5$) the values $\gamma_1q(\lambda_1)$ and  $\gamma_1\bar{q}(\lambda_1)$ are practically equal and stay
almost constant.  
This means that CG converges just as if the first error component were absent.
As clearly seen in the first plot of Figure~\ref{fig1}, up to iteration $75$,  the slope of the error $A$-norm (the $\times$ curve) is determined by the improved Chebyshev estimate (the $\triangle$ line), which confirms the estimate~\eqref{est3}. 
The first error component is ignored by the CG iterations until it becomes comparable in magnitude with the total error norm (until the $\times$ curve crosses the $*$ curve).  
Starting from this point, $\gamma_1q(\lambda_1)$ starts to decrease, thus damping the first error component.  
The value of $\gamma_1\bar{q}(\lambda_1)$ keeps on staying almost unchanged and exceeds the error norm.
It is interesting to note that at the cross point of the $\times$ line and the $*$ line (corresponding to iteration~$75$ for $\gamma_1=0.05$ and iteration~$25$ for $\gamma_1=5$) the estimate~\eqref{small_g1} holds for approximately the same values of $\delta$, namely for $\delta<10^{-3}$.
Thus, this value can be seen as a ``threshold'' value for what CG process ``considers'' as small.

Results of a similar test are presented in Figure~\ref{fig2}.
The parameters of test runs are the same, except that the first three components of the initial error $\gamma_1$, $\gamma_2$, $\gamma_3$
are now given the same specific value (namely, either $0.05$ or $2$).
Accordingly, the comparison CG process with the polynomial $\bar{q}(t)$
(plot by the $\diamond$ curve) is
now started at $x_0$, as defined in~\eqref{x0a} with $s=4$.

\begin{remark}
We emphasize that our results in this section are different from the ``effective condition number'' concept in the sense that we do not assume that some components in the initial error $x - x_0$ are zero.  
Furthermore, we note that our convergence results can be seen as a complement to the classical convergence estimates of Van~der~Sluis and Van~der~Vorst~\cite{RateCG} in the following sense.
Our results specify possible convergence behavior of CG at the initial stage, i.e., before certain components in the error are damped and CG exhibits its well known superlinear convergence (this ``superlinear''
convergence phase 
is seen in the $\times$ curves in Figures~\ref{fig1} and~\ref{fig2}
after they cross the $\diamond$ curves).
\end{remark}

\section{Practical optimization procedure}
\label{sec::opt_cost}
% MB
% In this section we present complexity analysis of the optimization of the proposed functional w.r.t. scalar parameter $\alpha$ of the $\mathrm{RIC}_{\alpha}(0)$ preconditioner and compare this complexity with complexity of grid search.
% The unit of complexity measure is complexity of vector by \emph{preconditioned} matrix multiplication.
% Therefore, denote by $C_{\mathrm{mv}}$ the complexity of vector by preconditioned matrix multiplication.
%
To optimize preconditioner parameters with respect to the stochastic convergence functional $F_s$, we use Brent method~\cite{brent2013algorithms}, which assumes multiple evaluations of $F_s$. 
This optimization method, which is a combination of the golden search and inverse quadratic interpolation,
requires a single evaluation of the functional per optimization step.
According to~\eqref{Fs} evaluation of $F_s$ requires prior knowledge of the unknown~$x_*$.
However, for practical evaluation of $F_s$ during the optimization procedure we set $x_*$ to be zero. 
This approach gives a tractable way to compute and therefore optimize $F_s$ with respect to the preconditioner parameter. 
The optimal preconditioner parameter is then used in solving test problems with arbitrary nonzero $x_*$.

%However, if the $A$-norm is used, in practice one may just set $x_*$ to zero, which means that a linear system
% with the right hand side $b=0$ is solved.  In this 
% paper in all cases $F_s$ is computed for $x_*=0$ and the 2-norm.

Hence, the total costs of the optimization procedure can approximately be expressed as $K n s$ preconditioned 
matrix--vector products (matvecs), 
% \todo{Check} This is fine 
where $K$ is the number of the preconditioned iterations
(its choice is discussed below in Section~\ref{sect:2func}),
$n$ is the number of random initial guess vectors,
$s$ is the number of optimization steps needed to find the optimal value to an acceptable accuracy.
% and $\textit{matvec}$ are the costs of a matrix--vector multiplication with
% the preconditioned matrix.

Thus, our optimization procedure is more efficient than a simple
trial-and-error search provided that the number of optimization
steps~$s$ is smaller than the number of trial-and-error runs.
In numerical experiments presented below we observe that up to $s=25$ optimization
steps suffice to achieve optimization accuracy $10^{-5}$, whereas to find the optimal value by trial-and-error runs
usually approximately $100$ test runs are needed to achieve the same optimization accuracy.

The same optimization procedure with Brent's method
is used in numerical tests of Section~\ref{sect:numexp} to
optimize the classical condition number functional $F_c$.
To compute the condition number in $F_c$ we use a standard sparse eigenvalue solver 
of Python numerical library
(this eigenvalue solver is 
similar to the \texttt{eigs} command in Octave and Matlab 
and based on the ARPACK software~\cite{ARPACK}).
We note that using eigensolvers for evaluating $F_c$ 
may be prohibitively expensive in practice and is done
only to compare parameter optimization based on~$F_s$
and on~$F_c$.

% Thus, the proposed approach is more efficient than grid search if $N_{\varepsilon}$ is much less than grid size.
% It is known (see, e.g.,~\cite[Section~13.2.1]{Henk:book},\cite{vdVorst1989})
% that the optimal parameter $\alpha^*$, which gives the fastest convergence, 
% often lies in the interval $[0.9, 1]$.
% Therefore, we expect that the required grid size to achieve acceptable accuracy should have order of hundreds points. \todo{Check grammar} 
% In contrast, due to superlinear convergence of Brent's method~\cite{brent2013algorithms} we assume that it converges after some dozens of iterations.
% This assumption is confirmed during experiments. \todo{Discuss how present this part}

\section{Numerical experiments}
\label{sect:numexp}
In this section we present comparison of the classical functional and the proposed stochastic one for choosing an optimal parameter in preconditioners for two test problems.
The first one is a diffusion problem solved by CG with the $\mathrm{RIC}_{\alpha}(0)$ preconditioner~\cite{axelsson1986eigenvalue}.
The second test problem is a mechanical structure problem \texttt{bccst16} from The SuiteSparse Matrix Collection~\cite{davis2011university}, where CG is preconditioned by SSOR($\omega$)~\cite{Templates}.

\subsection{Test problem 1}
\label{sec::test_problem1}

In this test problem linear systems are obtained by the standard second order central 
finite difference approximation of the following Dirichlet boundary value
problem for unknown $u(x,y)$:
\begin{equation}
\begin{aligned}
& - (D_1 u_x)_x - (D_2 u_y)_y = g(x,y), \quad (x,y)\in\Omega=[0,1]\times[0,1],
\\
& u(x,y)|_{\partial\Omega}=0,
\end{aligned}
\label{eq:model_problem}
\end{equation}
where the subscripts $\cdot_{x,y}$ denote the partial derivatives 
with respect to $x$ and $y$.
We consider two cases: in the first case the coefficients $D_{1,2}$ are 
taken to be identically one in the whole domain~$\Omega$.  In the second
case the coefficients $D_{1,2}$ are discontinuous:
\[
D_1 = \begin{cases}
1000, & \quad (x,y) \in \left[\frac14,\frac34\right]\times \left[\frac14,\frac34\right],
\\
1, & \quad \text{otherwise},
\end{cases}
\qquad
D_2 = \frac12 D_1.
\]
The right hand side function $g(x,y)$ is taken such that values of the function
\begin{equation}
u(x,y) = \sin(\pi x)\sin(\pi y) 
\label{eq::rhs}
\end{equation}
on the finite difference mesh 
are the entries of the exact solution of the discretized problem.

\subsubsection{Comparison of the two functionals}
\label{sect:2func}
To perform comparison of the proposed functional~\eqref{Fs} and the classical one~\eqref{Fc} four particular test linear systems are considered. 
These linear systems are obtained from test problem~\eqref{eq:model_problem} with 
the right hand side such that~\eqref{eq::rhs} is exact solution, and the following four sets of parameters:
\begin{enumerate}
\item $m = 2500$ (mesh $52\times 52$), constant coefficients $D_{1,2}$; 
\item $m = 2500$ (mesh $52\times 52$), discontinuous coefficients $D_{1,2}$;
\item $m = 10000$ (mesh $102\times 102$), constant coefficients $D_{1,2}$; 
\item $m = 10000$ (mesh $102\times 102$), discontinuous coefficients $D_{1,2}$.
\end{enumerate}
In the experiments the 2-norm is used to compute $F_s$. 
Also, we use $n = 50$ initial guess vectors, which,
taking into account stochastic convergence, 
appears to be a reasonable 
value (see also Section~\ref{sect:n}).

The optimal $\alpha$ value for the $\mathrm{RIC}_{\alpha}(0)$ 
preconditioner is sought in the interval $[0.9, 1]$,
known to contain the optimal 
value~\cite{vdVorst1989},
% \cite[Section~13.2.1]{Henk:book}.
\cite{Henk:book}.
The number of iterations~$K$, used in the convergence functionals
$F_s$ and $F_c$, is determined such that a required tolerance is achieved
for a reasonable (not yet optimized) value of $\alpha$.
In the experiments below we set the required tolerance for the residual
norm reduction $\|r_k\|/\|r_0\|$ to $10^{-7}$, where the residual is defined as $r_k = b - Ax_k$.
This tolerance value yields the values of $K$ given in Table~\ref{tab::K}.
To find the optimal parameters $\alpha^*_c$ and $\alpha^*_s$, corresponding to the classical and to the stochastic convergence functionals, 
respectively, Brent's optimization method is applied. 
The accuracy of the optimization procedure is set to $10^{-5}$
which is sufficient for our purposes.
The computed optimal values $\alpha^*_c$ and $\alpha^*_s$ are 
given in Table~\ref{tab::K}.
Here insignificant (taking into account 
optimization accuracy) digits are shown within brackets.

% To compute the considered functionals for given range of preconditioner parameter 
% $\alpha \in [0.9, 1]$, we  have to choose number of iterations $K$ for which we compare $F_s$ and $F_c$, respectively.
% The choice of $K$ can be explained with the following heuristic: for each test case we choose $K$ such that a tolerance $10^{-7}$ is achieved for some reasonable $\alpha$, see Table~\ref{tab::K}.
% For each test case we find appropriate number of iterations $K$ and use Brent's method to find optimal parameters $\alpha^*_c$ and $\alpha^*_s$ corresponding to classical functional and the proposed one, respectively.

\begin{table}[!h]
\centering
\caption{Number of iterations $K$ used in the convergence functionals
$F_s$ and $F_c$ (for the tolerance is $10^{-7}$) and corresponding optimal parameters $\alpha^*_s$ and $\alpha^*_c$}
\begin{tabular}{lccc}
\toprule
Test case & $K$ & $\alpha^*_s$ & $\alpha^*_c$\\
\midrule
$m=2500$, constant $D_{1,2}$ & 20 & $0.98257(07)$ & $ 0.99618(02)$ \\
$m=2500$, discontinuous $D_{1,2}$ & 30 & $0.97671(44)$ & $0.99999(47)$ \\
$m=10000$, constant $D_{1,2}$ & 35 & $0.99245(52)$ & $0.99900(93)$  \\
$m=10000$, discontinuous $D_{1,2}$ & 45 & $0.99451(65)$ & $0.99999(33)$ \\
\bottomrule
\end{tabular}
\label{tab::K}
\end{table}

% This parameter has to be fine tuned to decrease complexity and, in the same time, achieve the desired behavior of the functional. 
Figures~\ref{fig::52_nonsmooth}--\ref{fig::102_smooth} show dependence of both functionals on the preconditioner parameter $\alpha$. 
Plots (a) correspond to stochastic functional and plots (b) correspond to the classical one. 
As can be seen in the plots,
the stochastic functional $F_s$ tends, more often than $F_c$, to have a 
minimum \emph{close} to one, rather than \emph{exactly} at one.
% demonstrate that the proposed stochastic functional tends to have minimum not in the border of the considered interval in contrast to the classical functional.
%
% MB: this is evident, otherwise Brent's method fails
% Explicit optimization with Brent's method gives the same results.
To investigate this difference, we plot eigenvalue distribution of the preconditioned matrices corresponding to $\alpha^*_s$ and $\alpha^*_c$ and compare eigenvalue clustering.
Figures~\ref{fig:stoch_class_52_nonsmooth_eigvals},~\ref{fig:stoch_class_52_smooth_eigvals},~\ref{fig:stoch_class_102_nonsmooth_eigvals} and~\ref{fig:stoch_class_102_smooth_eigvals} show the spectra of the preconditioned matrices corresponding to $\alpha^*_s$ and $\alpha^*_c$.
% The similar plots for $\alpha^*_c$ are given in Figures~\ref{fig:class_52_nonsmooth_eigvals},~\ref{fig:class_52_smooth_eigvals},~\ref{fig:class_102_nonsmooth_eigvals} and~\ref{fig:class_102_smooth_eigvals}.
Spectrum distribution plots demonstrate that $\alpha^*_s$ yields spectra
better sparsified at their lower part than $\alpha^*_c$.
Consequently, 
%MB for the problem~\eqref{eq:model_problem} with ground truth solution~\eqref{eq::rhs} 
convergence of the preconditioned CG, which is tested on  problem~\eqref{eq:model_problem}, for $\alpha^*_s$ is faster than for $\alpha^*_c$ , see Figure~\ref{fig:conv_compare_52_nonsmooth},~\ref{fig:conv_compare_52_smooth},~\ref{fig:conv_compare_102_nonsmooth} and~\ref{fig:conv_compare_102_smooth}.

\subsubsection{Other possible distributions of $x_0$}
\label{sec::gf}

To see how our approach depends on the choice of the distribution of 
$x_* - x_0$, in this section we test our optimization procedure for 
$x_* - x_0$ whose entries are taken from a stationary Gaussian
random field.  More specifically, 
%mb: To study the robustness of the proposed approach to the distribution of the $x_* - x_0$, we 
assume $x_* - x_0$ is a discretization of a 
%mb: spatially correlated random field, in particular 
stationary Gaussian random field with zero mean and the Gaussian covariance function~\cite{kroese2015spatial,dietrich1997fast} 
\[
C(x, y) = \gamma(x - y) = \exp\left(\frac{\|x - y\|_2^2}{\sigma^2} \right),
\]
where $\sigma^2$ is a parameter of the random field.
We consider the test problem~\eqref{eq:model_problem}, $m=2500$ and the discontinuous coefficients~$D_{1,2}$.
To observe the effect of this distribution of $x_* - x_0$ on the optimal parameter $\alpha^*_s$, we solve the same optimization problem as in the previous section, but use the Gaussian random field with different values of $\sigma^2 \in \{ 10^{-1}, 10^{-2}, 10^{-3}, 10^{-4}\}$ to generate trial vectors for computation of $F_s$.
The obtained optimal parameters $\alpha^*_s$ are practically identical for all the considered values of $\sigma^2$ and almost indistinguishable from the 
parameter values of the normal distribution case, see Table~\ref{tab::K}.
These values of $\alpha^*_s$ also yield practically indistinguishable residual norm convergence plots, therefore we present in Figure~\ref{fig::x0_norm_gf_52_nonsmooth} only plots for the standard normal distribution of $x_* - x_0$ and for the Gaussian random field with $\sigma^2 = 0.01$.
Thus, the proposed approach can be used not only for the trial vectors generated from the standard normal distribution but also from the stationary Gaussian random field.

\begin{figure}[!h]
    \centering
    \begin{subfigure}[t]{0.45\textwidth}
    \centering
 \includegraphics[width=\textwidth]{{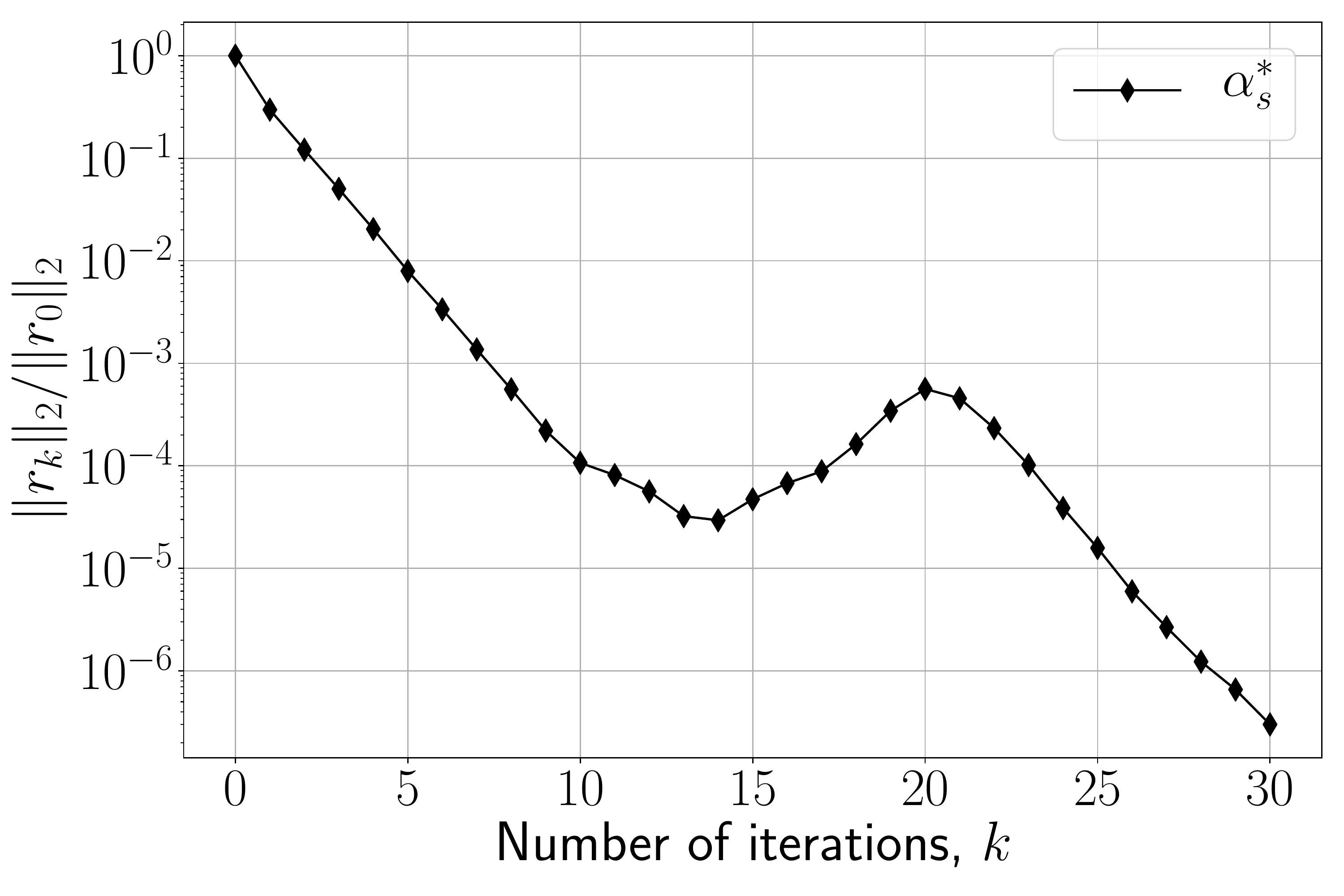}}
    \caption{}
\label{fig:x0_norm_stoch_52_nonsmooth_iter}
\end{subfigure}
~
	\begin{subfigure}[t]{0.45\textwidth}
    \centering
\includegraphics[width=\textwidth]{{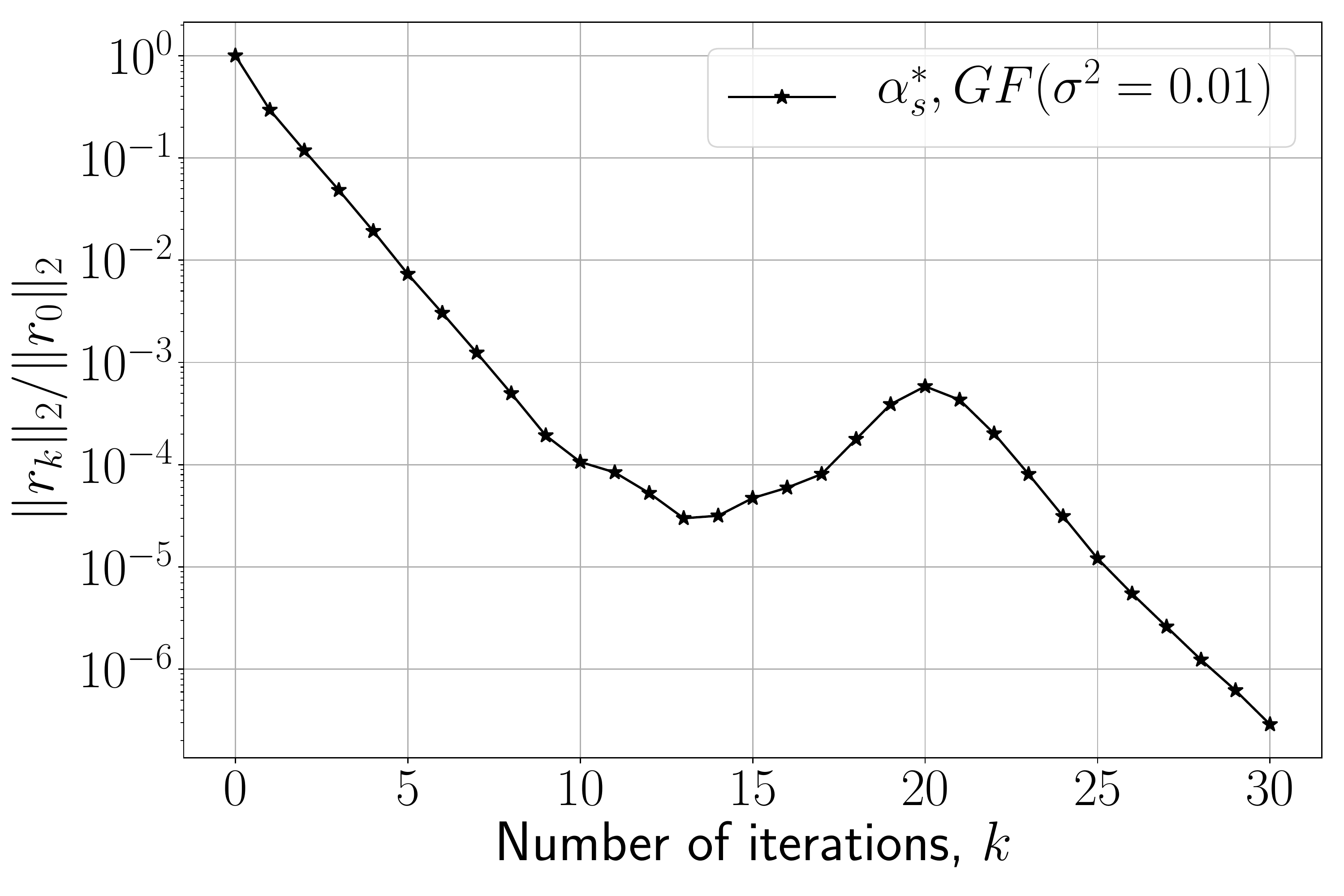}}
    \caption{}
    \label{fig:x0_gf_stoch_52_nonsmooth_iter}
    \end{subfigure}
\caption{Comparison of convergence for the optimal parameters obtained with trial vectors generated from standard normal distribution (left plot) and Gaussian random field with $\sigma^2 = 0.01$ (right plot)}
\label{fig::x0_norm_gf_52_nonsmooth}
\end{figure}
    
\subsubsection{The number of random initial guess vectors $n$}
\label{sect:n}
The costs of optimization procedure depend heavily on the choice of the number of initial guess vectors $n$, see~ Section~\ref{sec::opt_cost}. 
As mentioned above, we use $n=50$ in all the experiments presented above.
In this section we test how sensitive the obtained results are to the choice of~$n$.
It appears that similar results can be obtained with smaller values of~$n$.
We consider the test case with $m=2500$ and constant coefficients $D_{1,2}$.
According to Table~\ref{tab::K}, number of iterations $K$ for this test case is 
set to~$20$.
For this experiment setting, the dependence of the proposed stochastic convergence functional $F_s$ on the preconditioner parameter $\alpha$ for several values of $n$
is plot in Figure~\ref{fig::n}.
As we see in the plots, the larger $n$, the smoother the
dependence line and already $n=10$ is enough for an adequate 
representation of the considered dependence.
In Figure~\ref{fig::conf_int} the confidence interval is plot for $F_s$.

% Also, it is clear that the stochastic functional strongly depends on the number of 
% iterations $K$\todo{MB: Is $K$ defined before?}, 
% see plots~(a) 
% in Figures~\ref{fig::52_nonsmooth}--\ref{fig::102_smooth}

% The reason of such phenomenon is that minimization of the classical functional is equivalent to minimization of the condition number of the system matrix.
% But the convergence of CG mostly depends on clustering of preconditioned system matrix eigenvalues and exactly this feature reflects by the proposed stochastic functional.
% Thus, we see that optimal parameter $\alpha^*_c$ given by classical functional differs from the optimal parameter $\alpha^*_s$ given by the proposed stochastic functional.

% $n = 2500$, non-smooth
\begin{figure}[!h]
    \centering
    \begin{subfigure}[t]{0.45\textwidth}
    \centering
 \includegraphics[width=\textwidth]{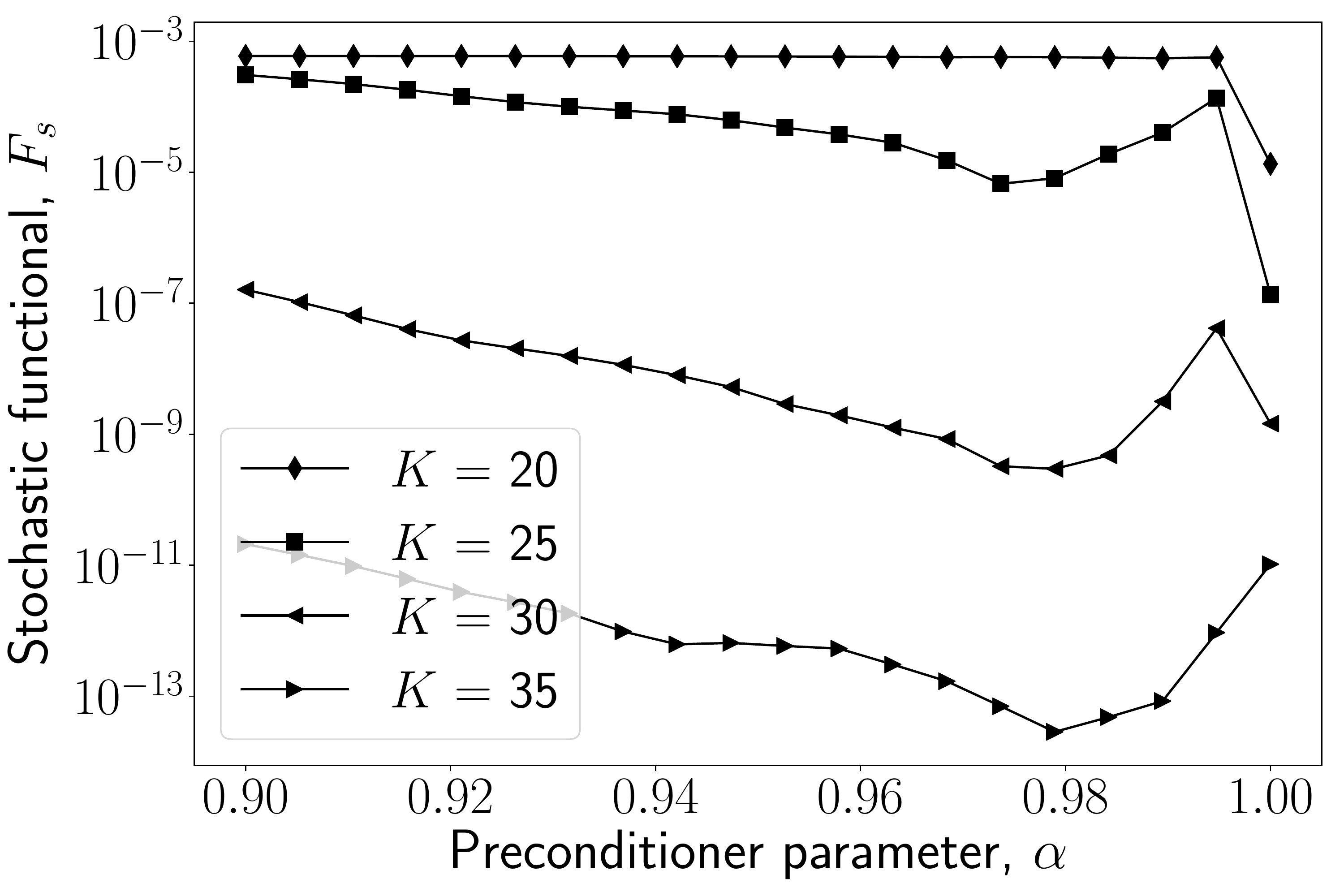}
    \caption{}
\label{fig:stoch_52_nonsmooth_iter}
\end{subfigure}
~
	\begin{subfigure}[t]{0.45\textwidth}
    \centering
\includegraphics[width=\textwidth]{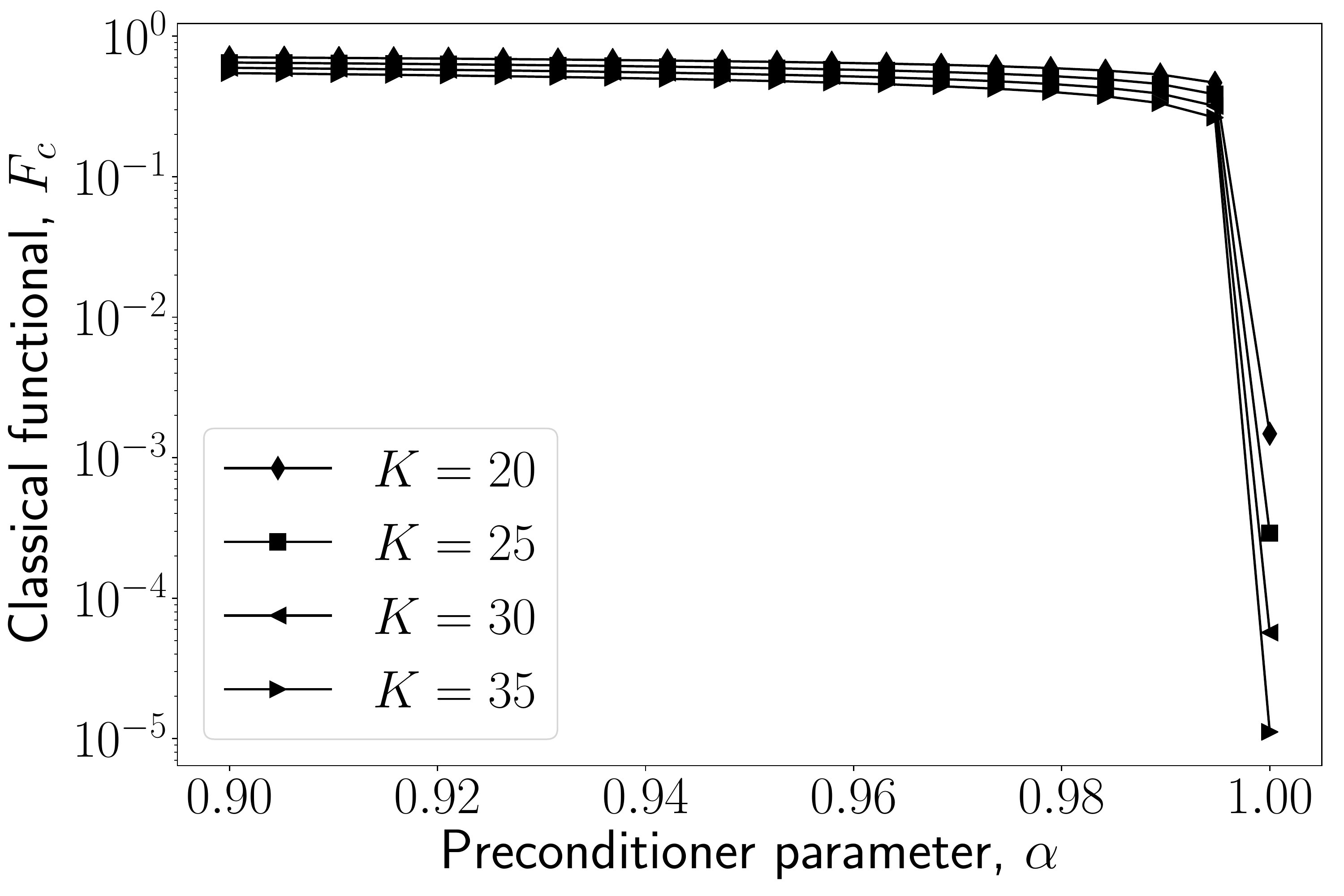}
    \caption{}
    \label{fig:class_52_nonsmooth_iter}
    \end{subfigure}
\\
\begin{subfigure}[t]{0.47\textwidth}
\centering
\includegraphics[width=\textwidth,height=0.24\textheight]{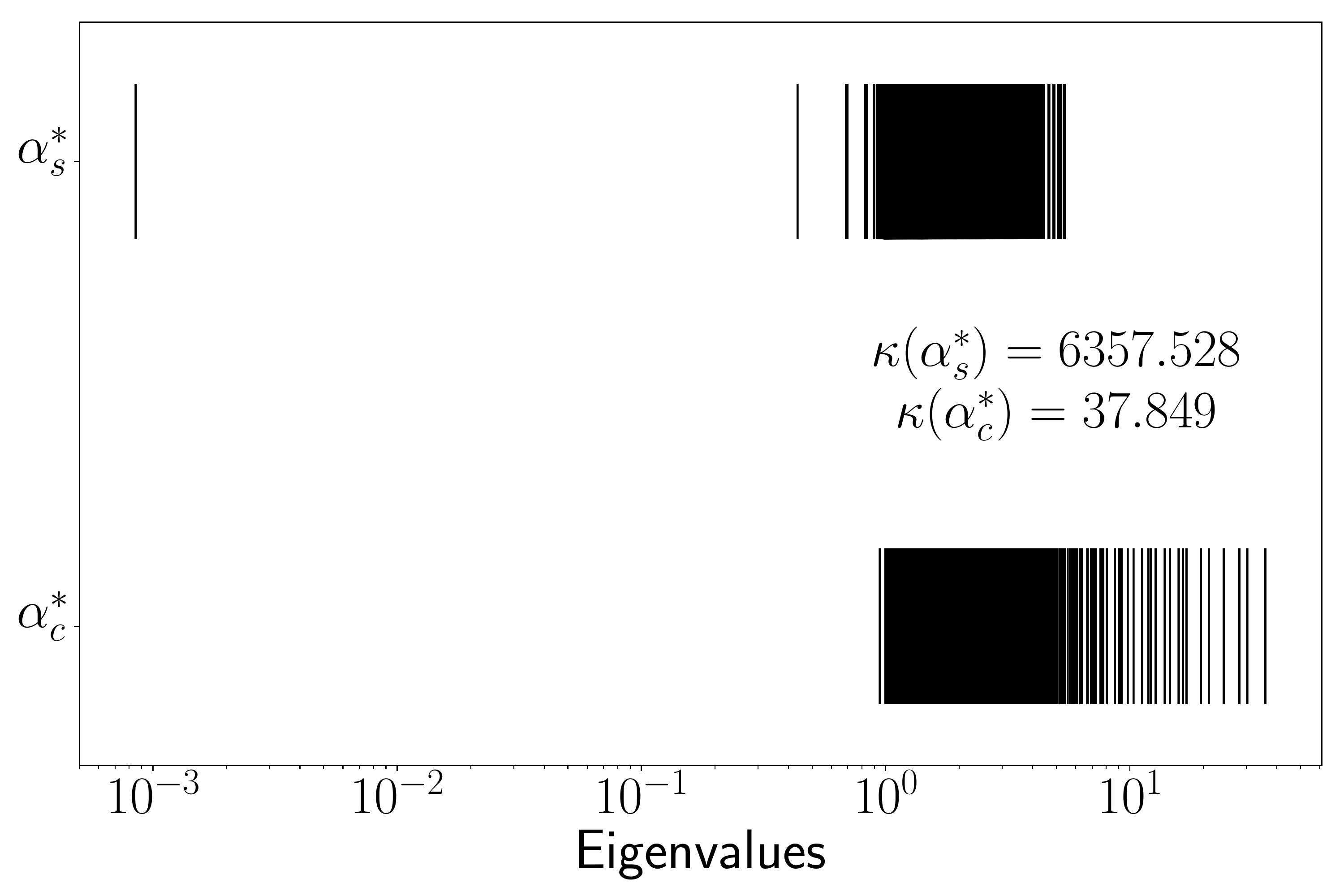}
    \caption{}
 \label{fig:stoch_class_52_nonsmooth_eigvals}
\end{subfigure}
~
    \begin{subfigure}[t]{0.5\textwidth}
    \centering
 \includegraphics[width=\textwidth]{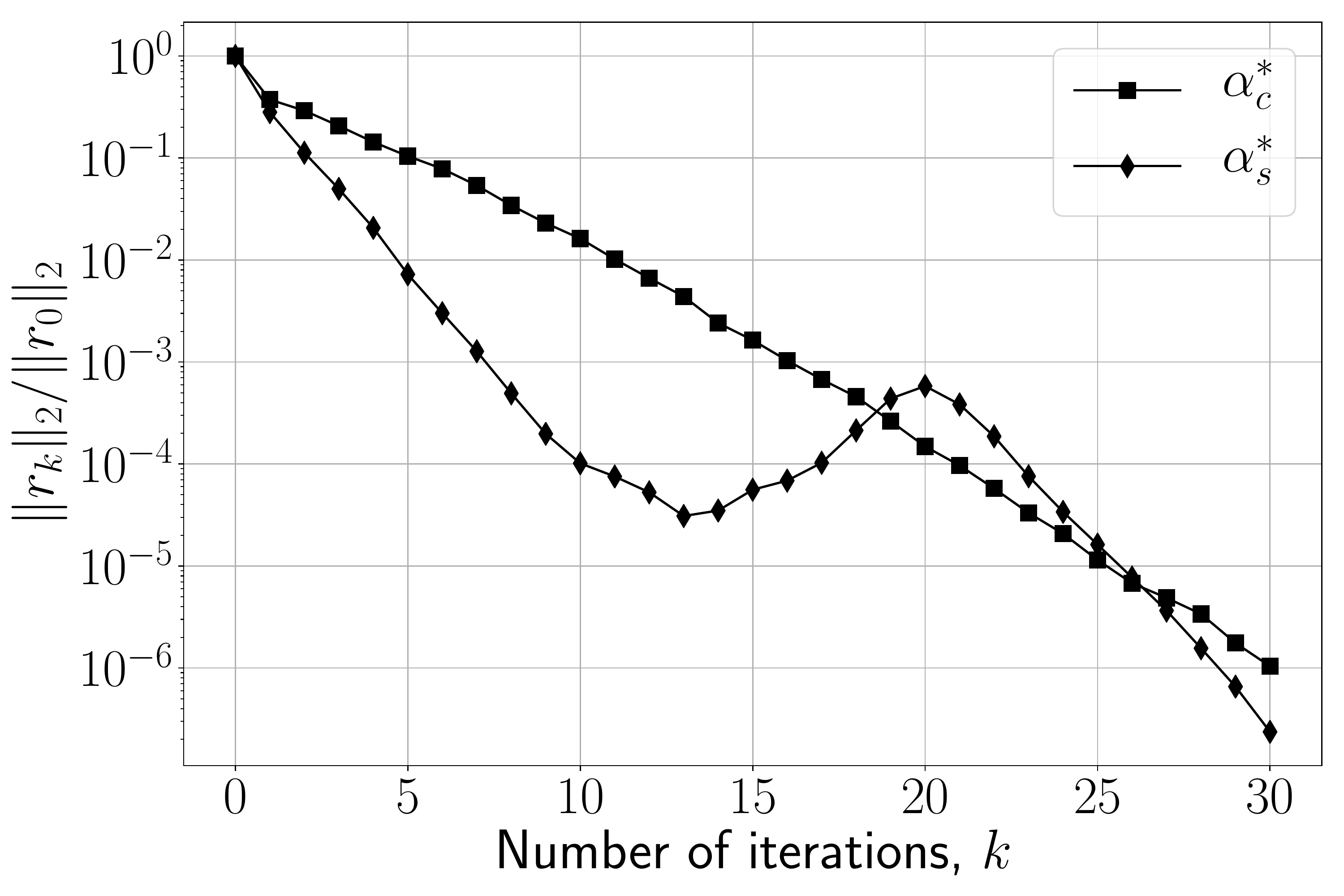}
    \caption{}
\label{fig:conv_compare_52_nonsmooth}
    \end{subfigure}
    \caption{Test case $m = 2500$, discontinuous coefficients $D_{1,2}$.
    Plots~(a),(b): dependence of the functionals $F_s$, $F_c$ on the preconditioner parameter $\alpha$ for different iteration number $K$.
    Plots~(c): eigenvalues of the preconditioned matrix for the optimal $\alpha_s^*$ (based on $F_s$) and $\alpha_c^*$ (based on $F_c$).
    Plot~(d): Residual norm convergence of CG preconditioned by $\mathrm{RIC}_{\alpha}(0)$ for $\alpha_s^*$ and $\alpha_c^*$.}
\label{fig::52_nonsmooth}
\end{figure}

% $n = 2500$, smooth
\begin{figure}[!h]
\centering
\begin{subfigure}[t]{0.45\textwidth}
\centering
\includegraphics[width=\textwidth]{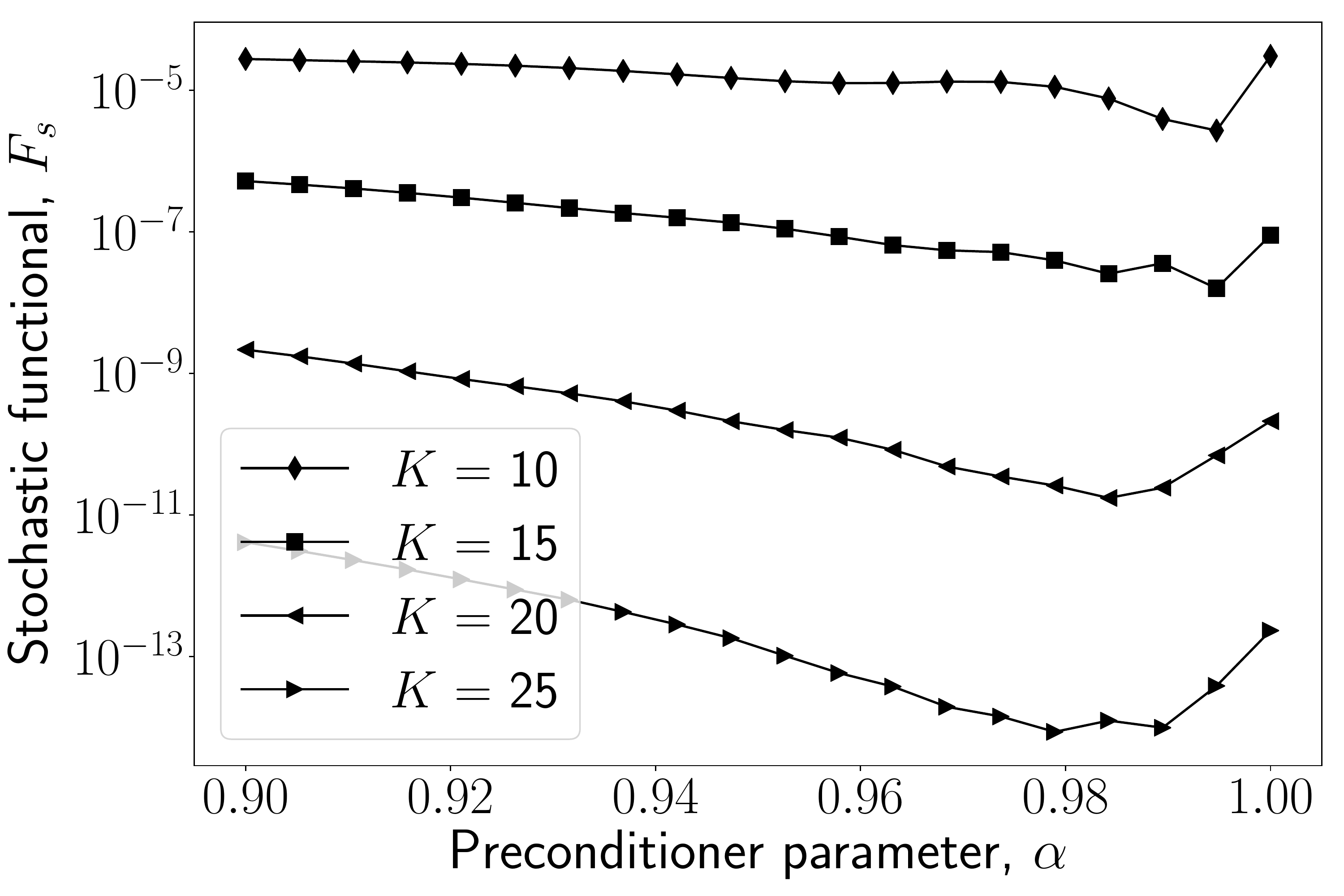}
\caption{}
\label{fig:stoch_52_smooth_iter}
\end{subfigure}
~
\begin{subfigure}[t]{0.45\textwidth}
\centering
\includegraphics[width=\textwidth]{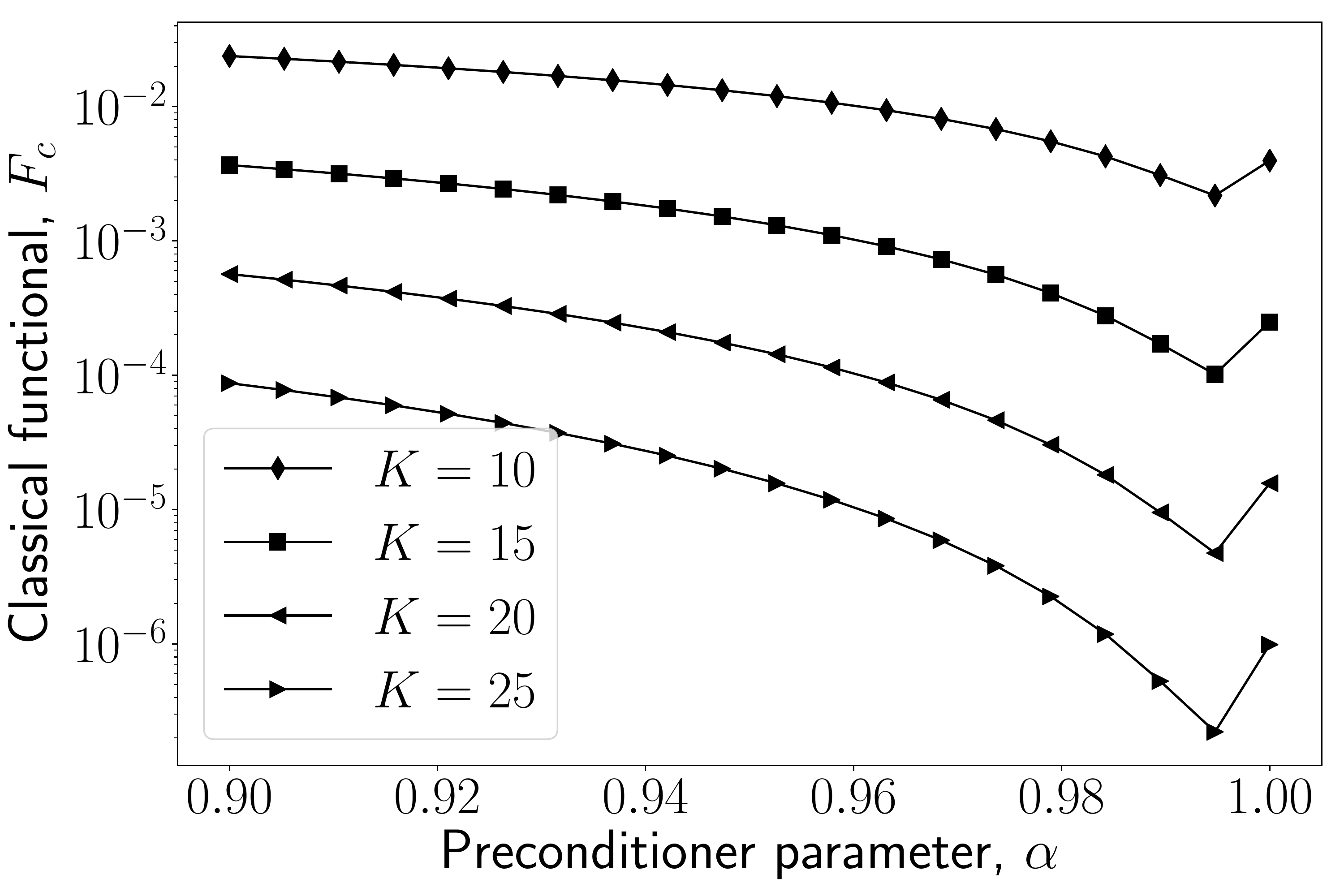}
\caption{}
\label{fig:class_52_smooth_iter}
\end{subfigure}
\\
\begin{subfigure}[t]{0.45\textwidth}
\centering
\includegraphics[width=\textwidth,height=0.24\textheight]{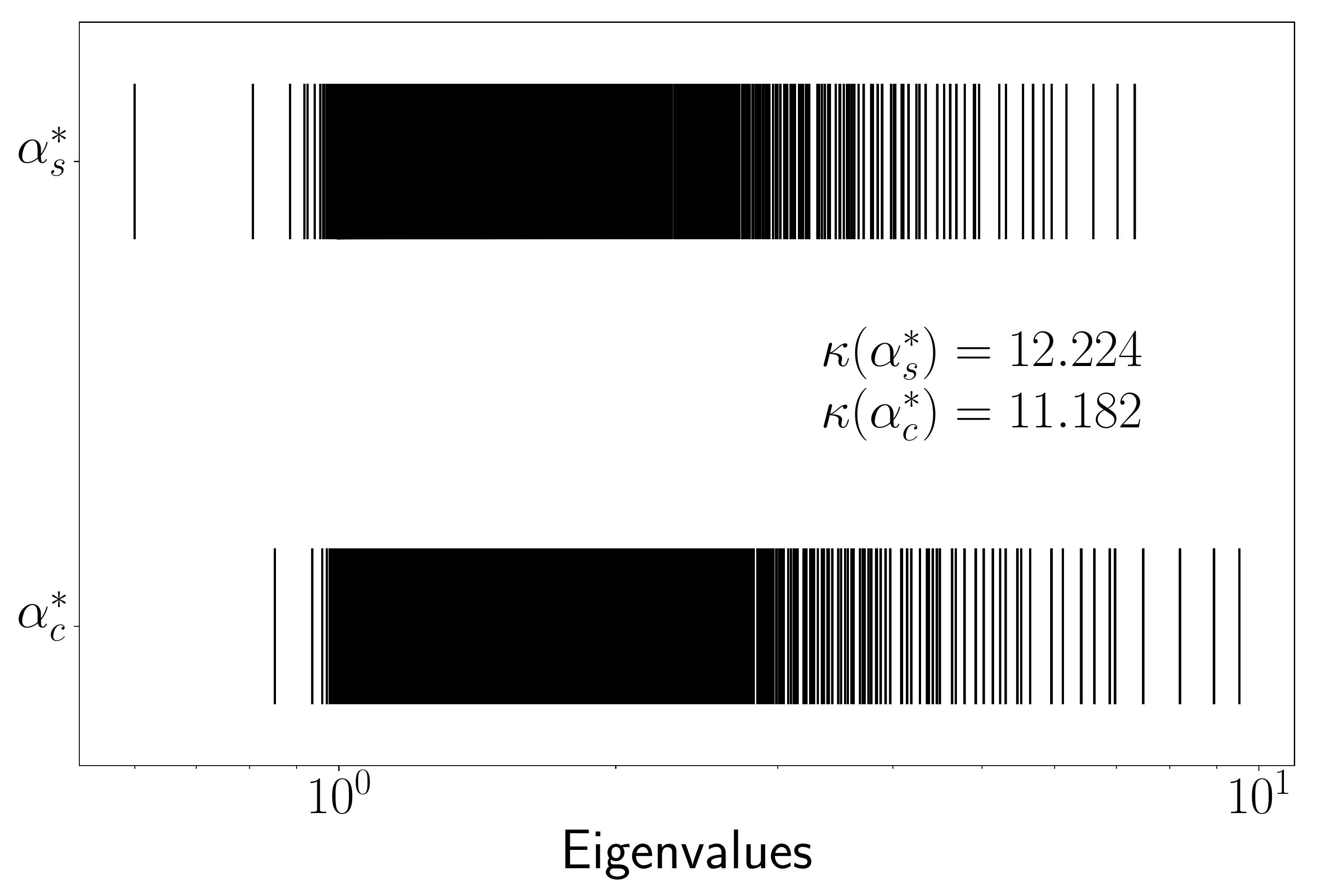}
\caption{}
\label{fig:stoch_class_52_smooth_eigvals}
\end{subfigure}
~
\begin{subfigure}[t]{0.5\textwidth}
\centering
\includegraphics[width=\textwidth]{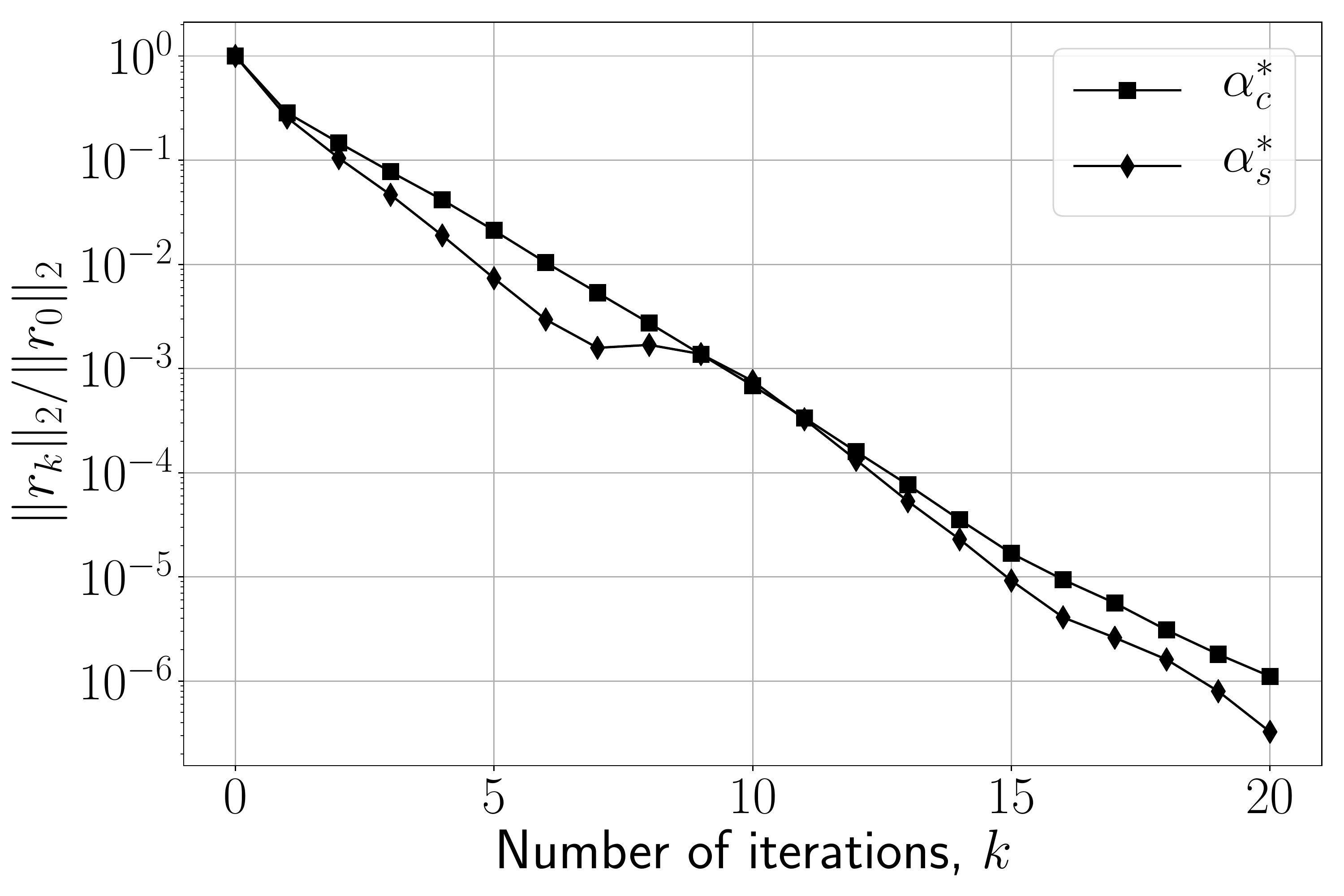}
\caption{}
\label{fig:conv_compare_52_smooth}
\end{subfigure}
    
    \caption{Test case $m = 2500$, constant coefficients $D_{1,2}$.
    Plots~(a),(b): dependence of the functionals $F_s$, $F_c$ on the preconditioner parameter $\alpha$ for different iteration number $K$.
    Plot~(c): eigenvalues of the preconditioned matrix for the optimal $\alpha_s^*$ (based on~$F_s$) and $\alpha_c^*$ (based on~$F_c$).
    Plot~(d): Residual norm convergence of CG preconditioned by $\mathrm{RIC}_{\alpha}(0)$ for $\alpha_s^*$ and $\alpha_c^*$.}
    \label{fig::52_smooth}
\end{figure}

% $n = 10000$, non-smooth
\begin{figure}[!h]
\centering
\begin{subfigure}[t]{0.45\textwidth}
\centering
\includegraphics[width=\textwidth]{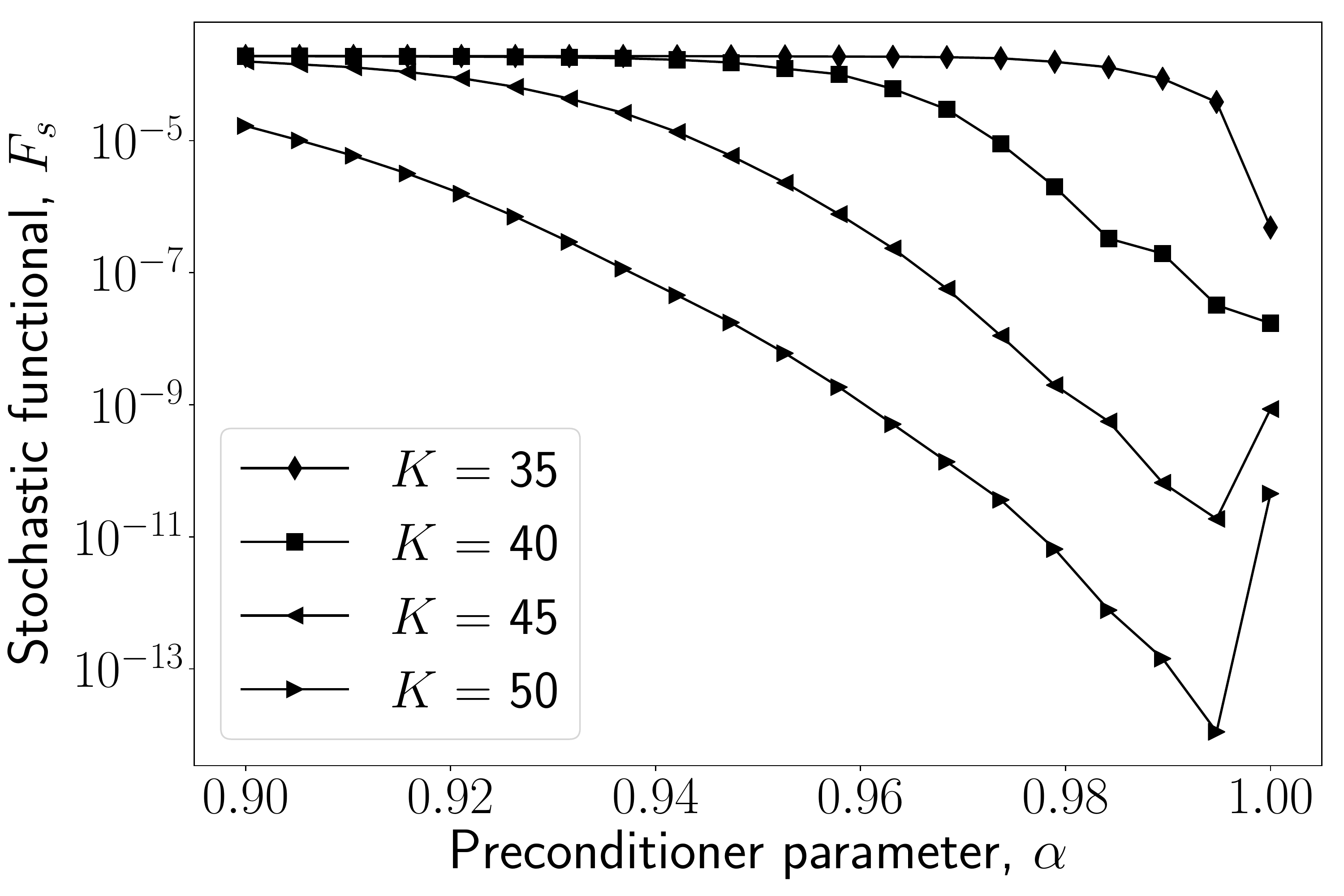}
\caption{}
\label{fig:stoch_102_nonsmooth_iter}
\end{subfigure}
~
\begin{subfigure}[t]{0.45\textwidth}
\centering
\includegraphics[width=\textwidth]{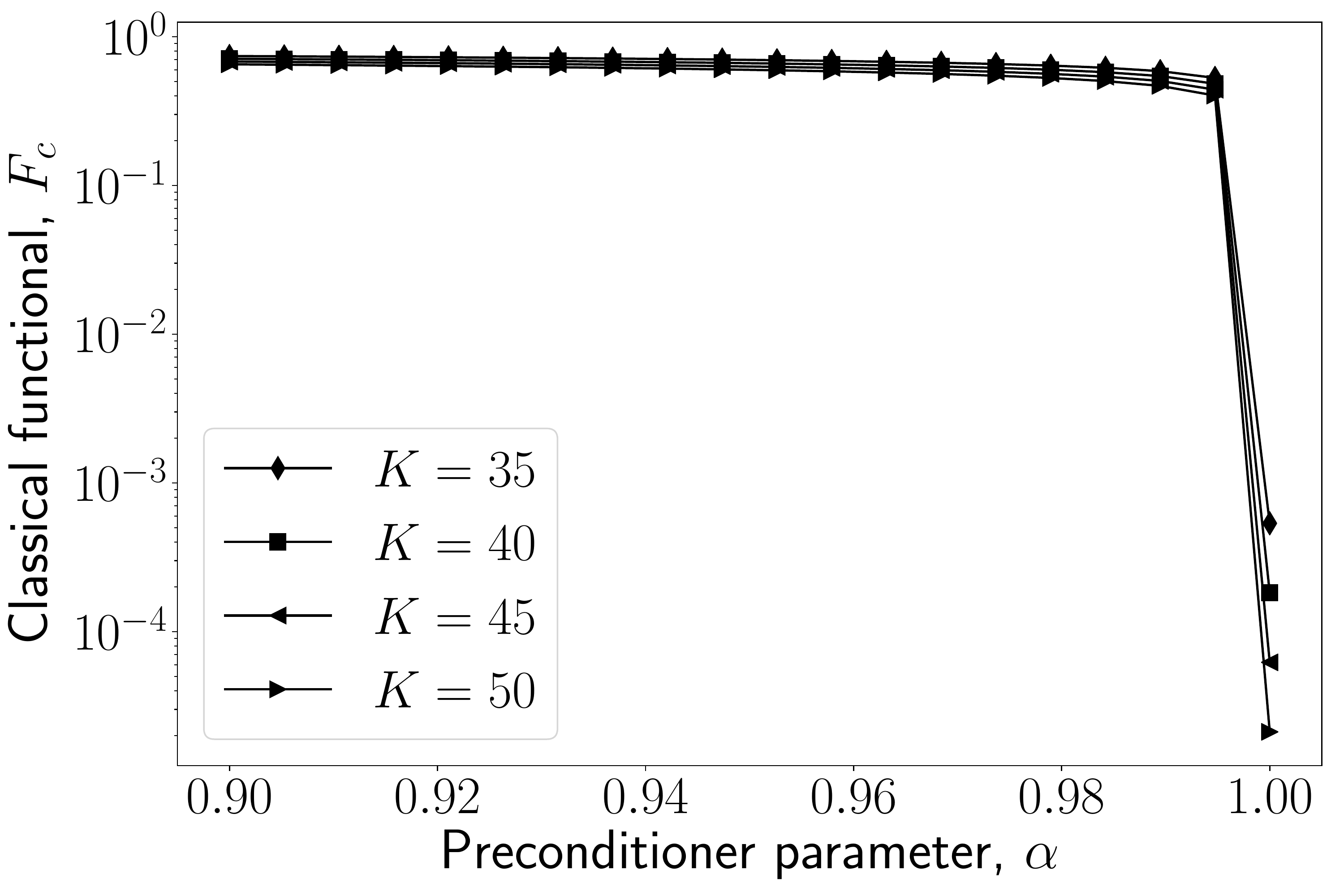}
\caption{}
\label{fig:class_102_nonsmooth_iter}
\end{subfigure}
\\
\begin{subfigure}[t]{0.45\textwidth}
\centering
\includegraphics[width=\textwidth,height=0.24\textheight]{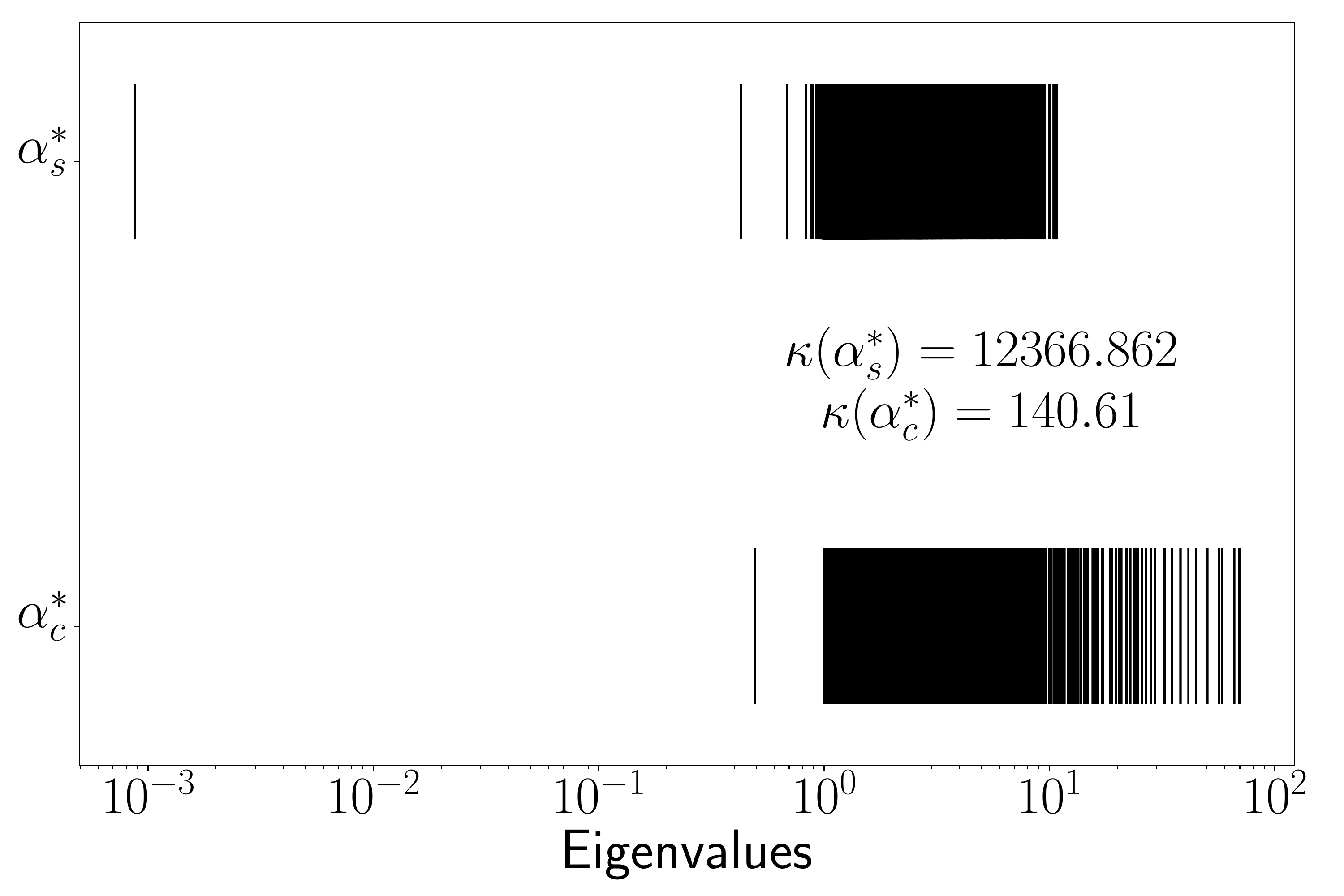}
\caption{}
\label{fig:stoch_class_102_nonsmooth_eigvals}
\end{subfigure}
~
\begin{subfigure}[t]{0.5\textwidth}
\centering
\includegraphics[width=\textwidth]{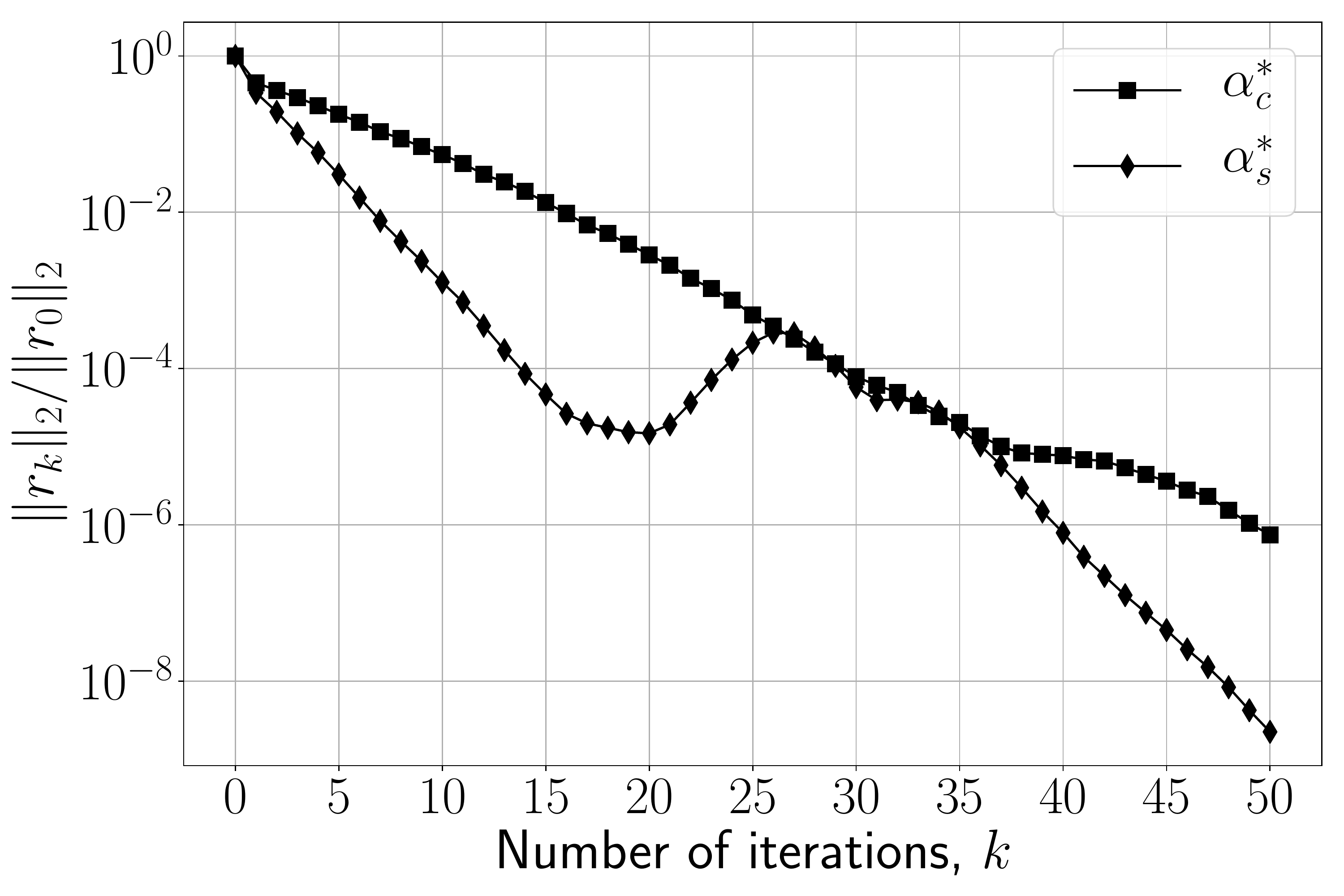}
\caption{}
\label{fig:conv_compare_102_nonsmooth}
\end{subfigure}

\caption{Test case $m = 10000$, discontinuous coefficients $D_{1,2}$.
    Plots~(a),(c): dependence of the functionals $F_s$, $F_c$ on the 
    preconditioner parameter $\alpha$ for different iteration number $K$.
    Plots~(c): eigenvalues of the preconditioned matrix for the 
    optimal $\alpha_s^*$ (based on~$F_s$) and $\alpha_c^*$ 
    (based on~$F_c$).
    Plot~(d): Residual norm convergence of CG preconditioned by $\mathrm{RIC}_{\alpha}(0)$ for $\alpha_s^*$ and $\alpha_c^*$.}
    \label{fig::102_nonsmooth}
\end{figure}

% $n = 10000$, smooth
\begin{figure}[!h]
\centering
\begin{subfigure}[t]{0.45\textwidth}
\centering
\includegraphics[width=\textwidth]{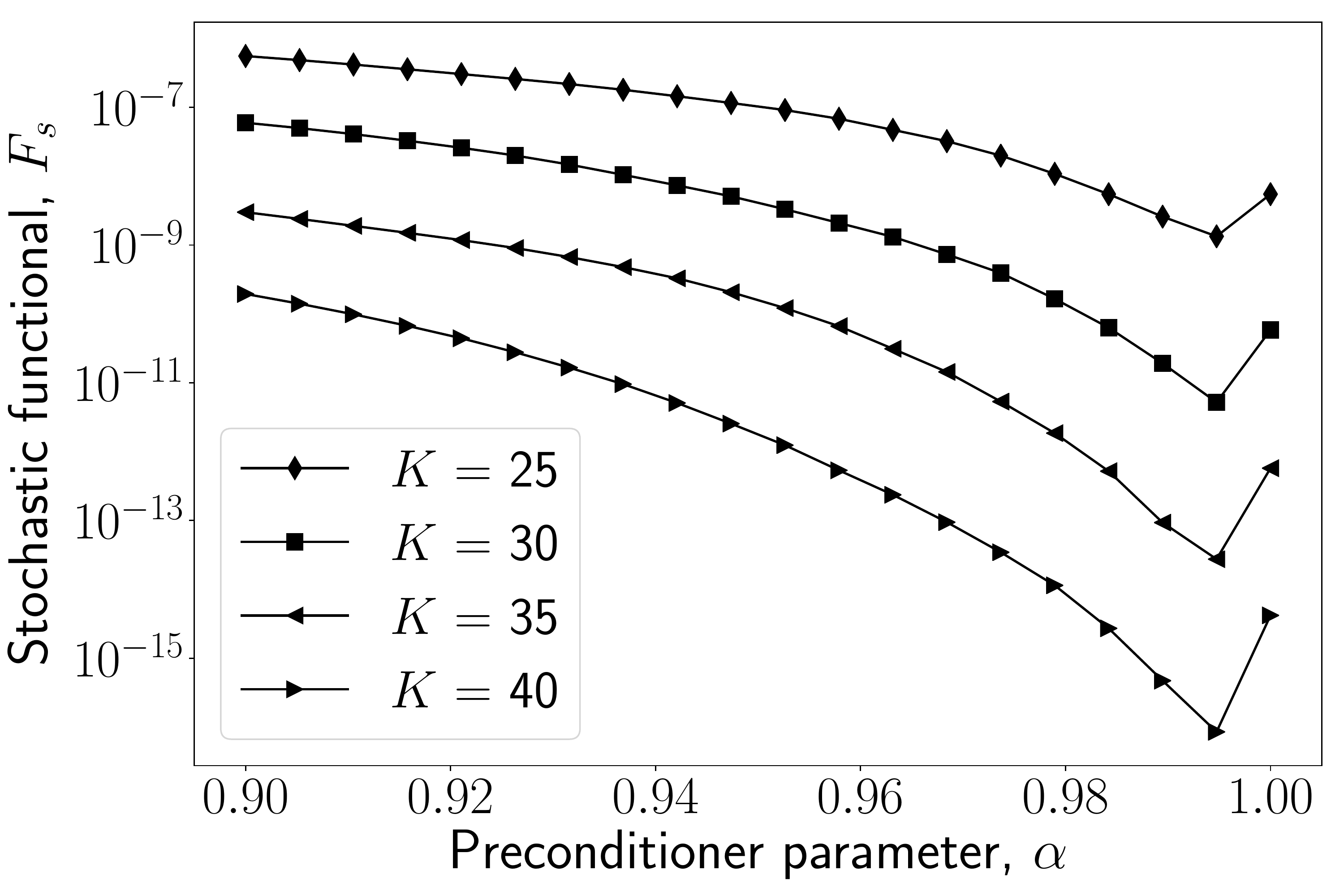}
\caption{}
\label{fig:stoch_102_smooth_iter}
\end{subfigure}
~
\begin{subfigure}[t]{0.45\textwidth}
\centering
\includegraphics[width=\textwidth]{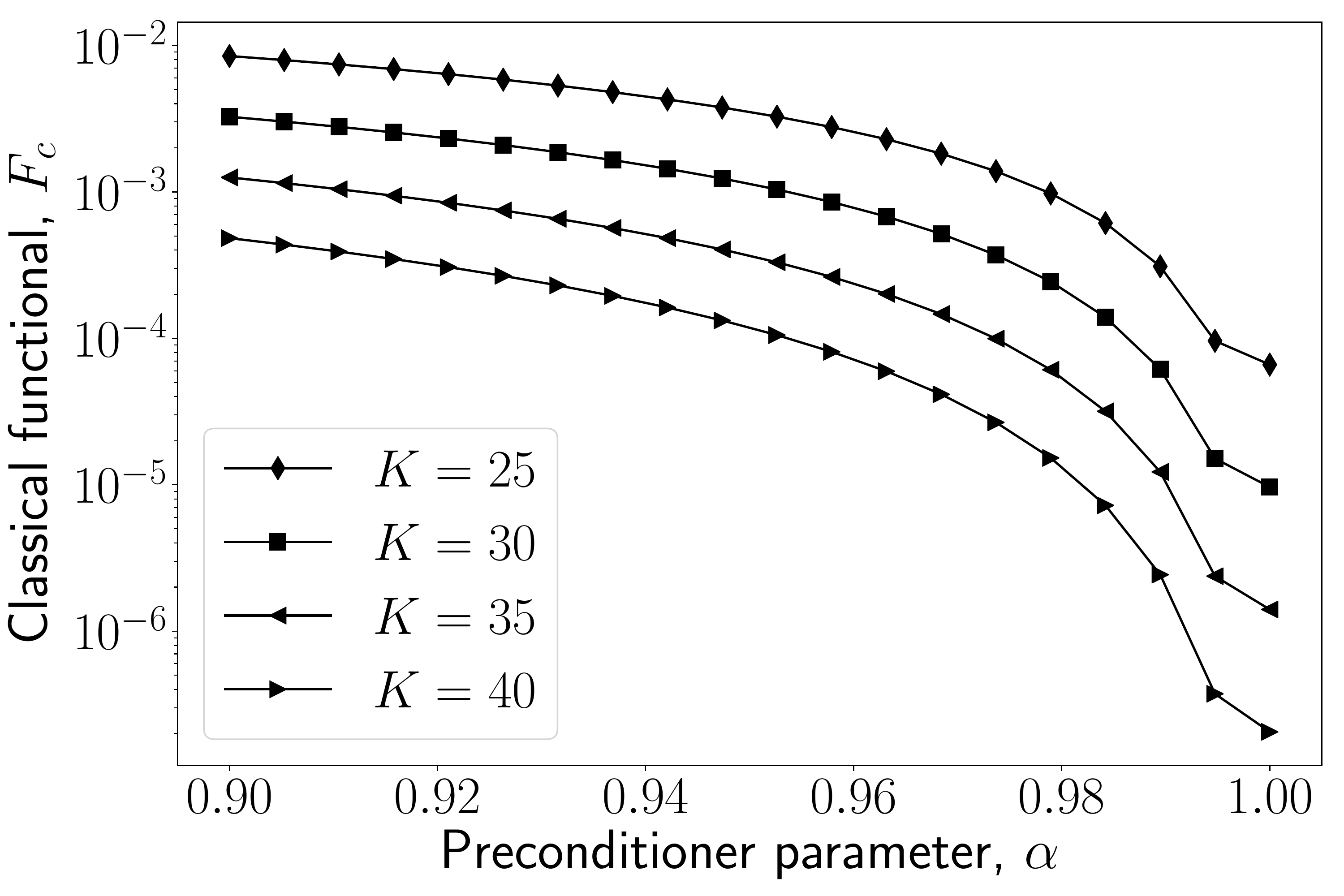}
\caption{}
\label{fig:class_102_smooth_iter}
\end{subfigure}
\\
\begin{subfigure}[t]{0.45\textwidth}
\centering
\includegraphics[width=\textwidth,height=0.24\textheight]{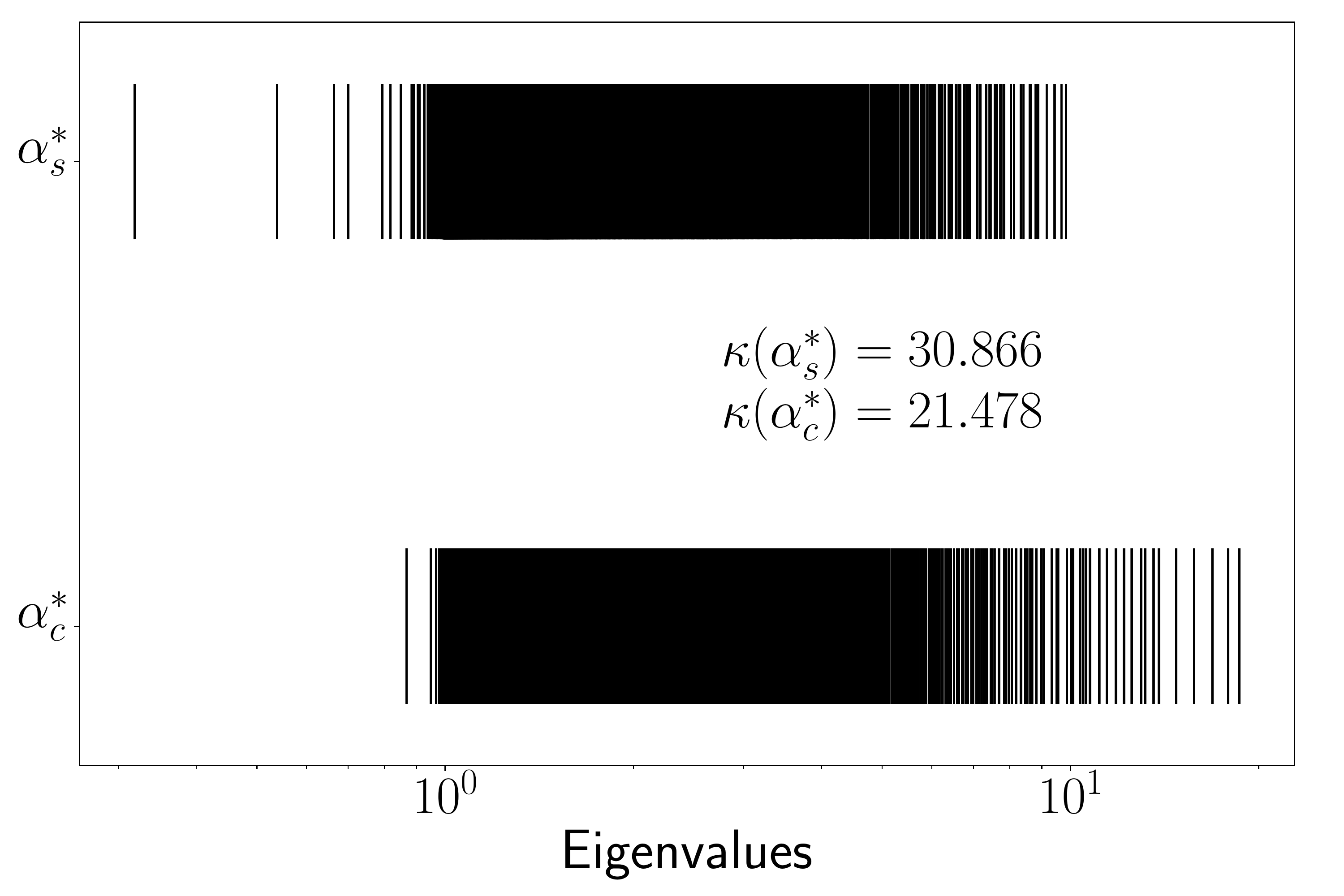}
\caption{}
\label{fig:stoch_class_102_smooth_eigvals}
\end{subfigure}
~
\begin{subfigure}[t]{0.5\textwidth}
\centering
\includegraphics[width=\textwidth]{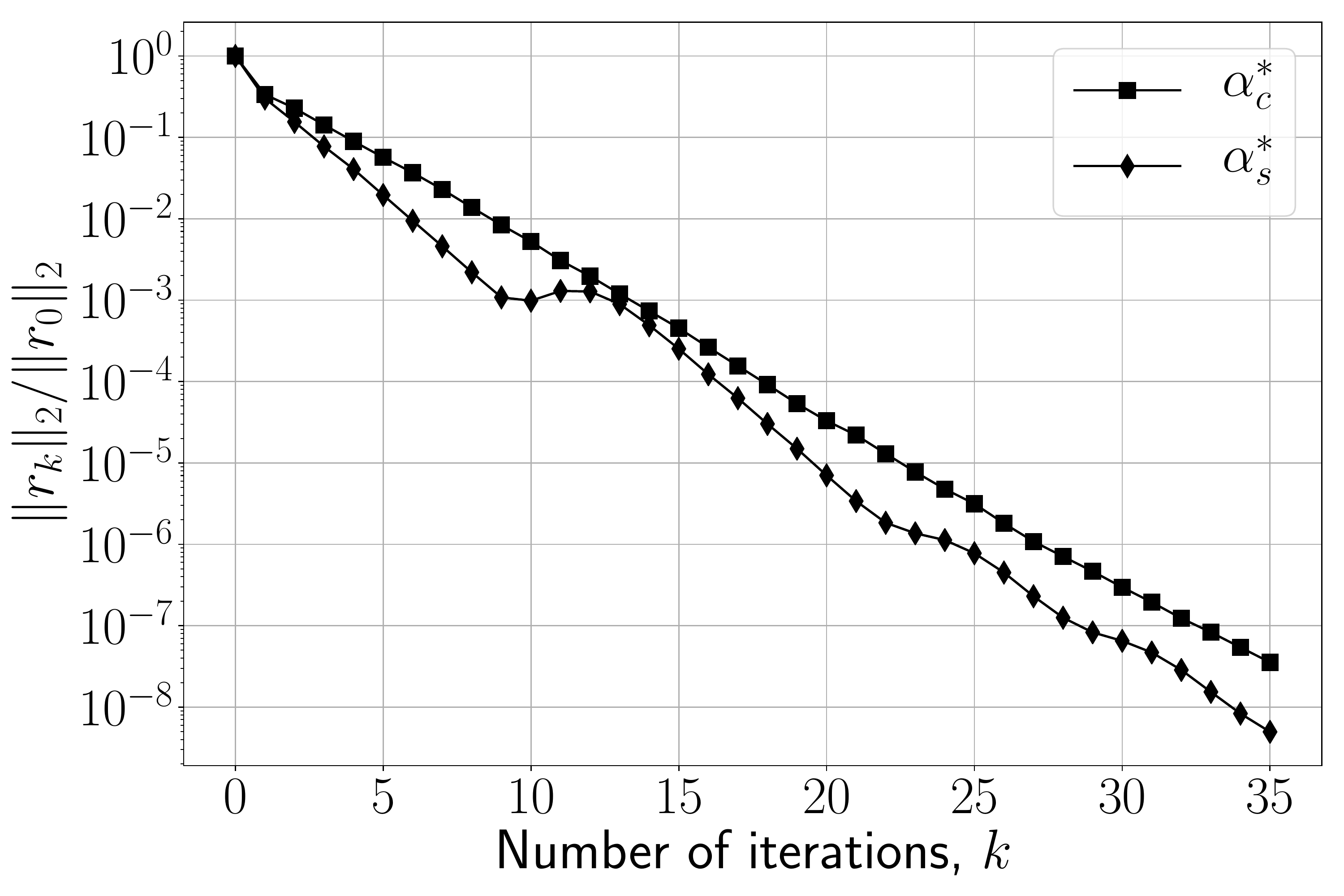}
\caption{}
\label{fig:conv_compare_102_smooth}
\end{subfigure}

    \caption{Test case $m = 10000$, constant coefficients $D_{1,2}$.
    Plots~(a),(b): dependence of the functionals $F_s$, $F_c$ on the 
    preconditioner parameter $\alpha$ for different iteration number $K$.
    Plots~(c): eigenvalues of the preconditioned matrix for the 
    optimal $\alpha_s^*$ (based on~$F_s$) and $\alpha_c^*$ 
    (based on~$F_c$).
    Plot~(d): Residual norm convergence of CG preconditioned by $\mathrm{RIC}_{\alpha}(0)$ for $\alpha_s^*$ and $\alpha_c^*$.}
    \label{fig::102_smooth}
\end{figure}

\begin{figure}
\centering
\includegraphics[width=0.7\textwidth]{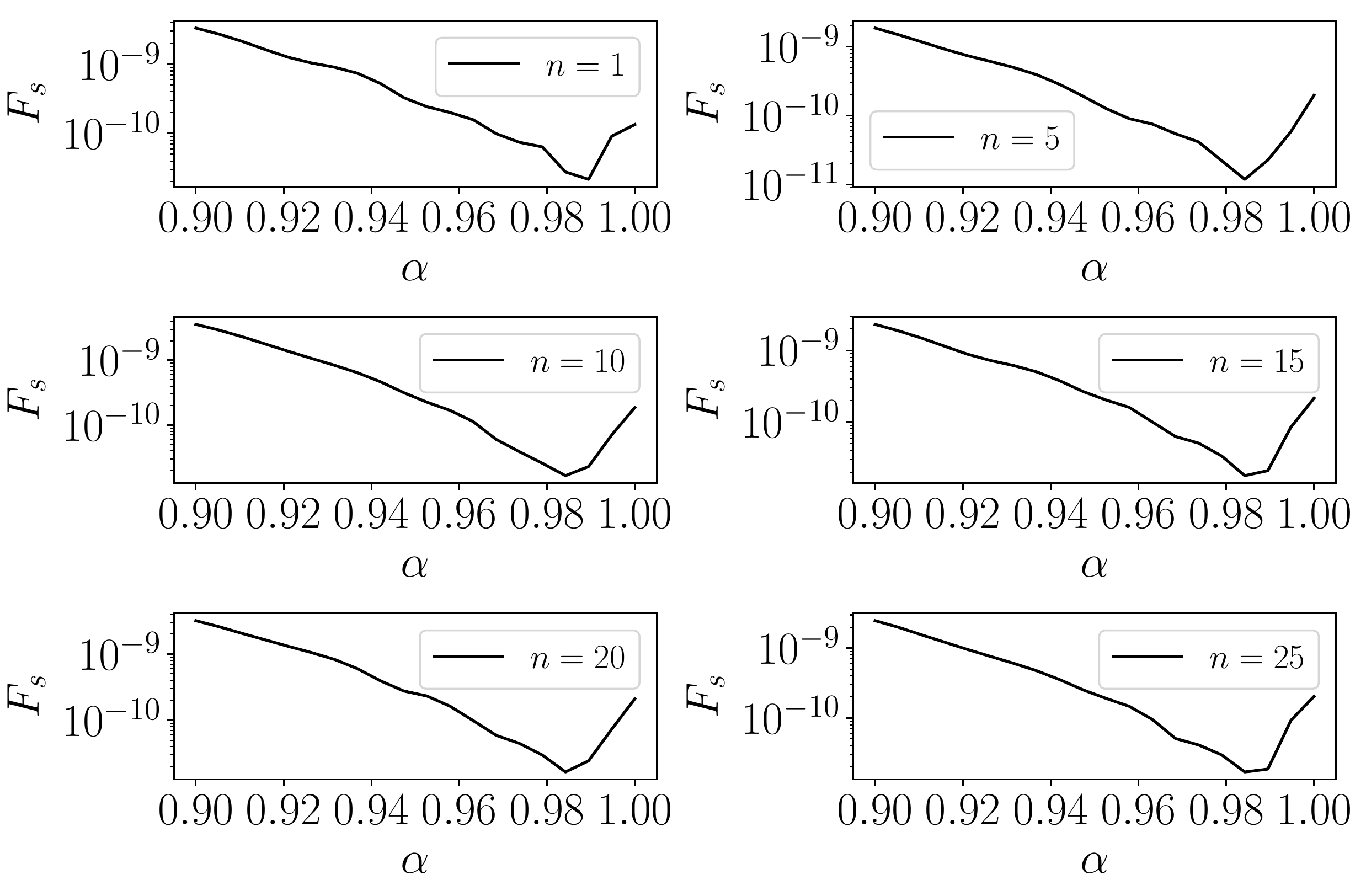}
\caption{Dependence of the stochastic convergence functional $F_s$ on the preconditioner parameter $\alpha$ for $K = 20$ and different values of initial guess vectors $n$. 
The test case is $m = 2500$, constant coefficients $D_{1,2}$.
The value $n=50$ used in the experiments results in a plot indistinguishable 
from the plot for $n=25$.}
\label{fig::n}
\end{figure}

\begin{figure}
\centering
\includegraphics[width=0.7\textwidth]{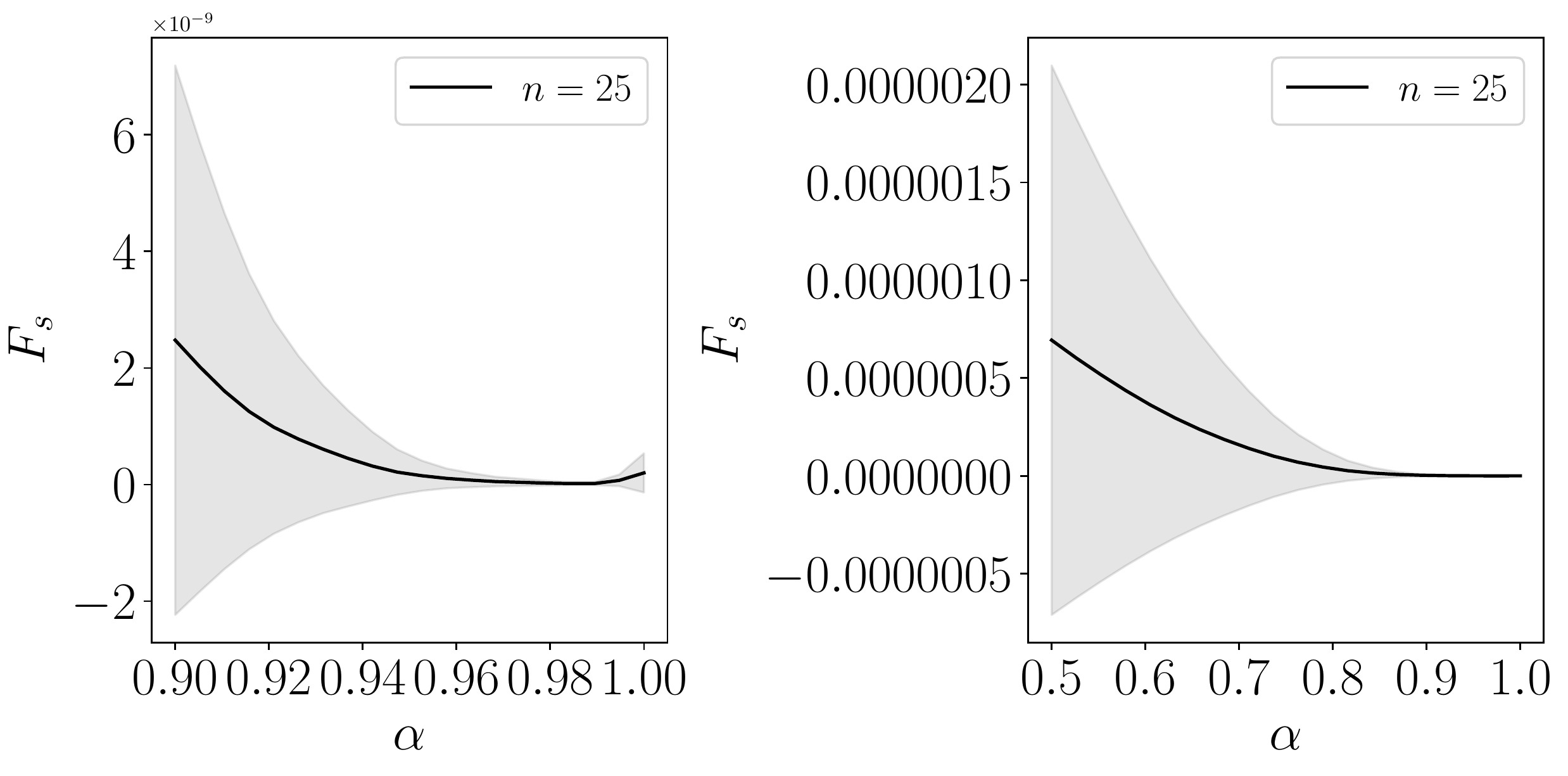}
\caption{Confidence interval (the gray area) of the stochastic convergence 
functional $F_s$ for $\alpha\in[0.9,1]$ (left) and $\alpha\in[0.5,1]$ (right).
The test case is $m = 2500$, constant coefficients $D_{1,2}$.}
\label{fig::conf_int}
\end{figure}

\subsection{Test problem 2}

As the second test problem a linear system with matrix \texttt{bcsstk16} from The SuiteSparse Matrix Collection~\cite{davis2011university} is taken of the size $m = 4884$.
We choose the right-hand side vector to have all its entries ones.

The main point of this test is to demonstrate that the proposed stochastic functional $F_s$ can be used to find parameter $\omega$ in the SSOR($\omega$) preconditioner, whenever the analytical formula~\eqref{om_opt} is not applicable.
The considered matrix is such that the Jacobi iterations diverge and therefore the analytical expression~\eqref{om_opt} can not be used.
Instead, we use the stochastic optimization procedure as discussed in Section~\ref{sec::test_problem1} with the number of preconditioned iterations $K = 15$ and number of random initial guess vectors $n=10$.
We search optimal parameter $\omega^*$ in interval $[0, 2]$, see~\cite{SaadBook2003}.
The results presented in Figure~\ref{fig::bcsstk16} show that both classical and stochastic functionals yield similar parameters values and undistinguished CG convergence plots.
However, using the classical functional $F_c$ is much more expensive than using the stochastic functional $F_s$, as eigenvalue computations are required for every evaluation of $F_c$.
Thus, with zero-order optimization method, e.g. Brent method, the proposed stochastic functional $F_s$ is more appropriate than $F_c$ to find unknown parameter in preconditioner SSOR($\omega$) for CG.

\begin{figure}[!h]
\centering
\begin{subfigure}[t]{0.45\textwidth}
\centering
\includegraphics[width=\textwidth]{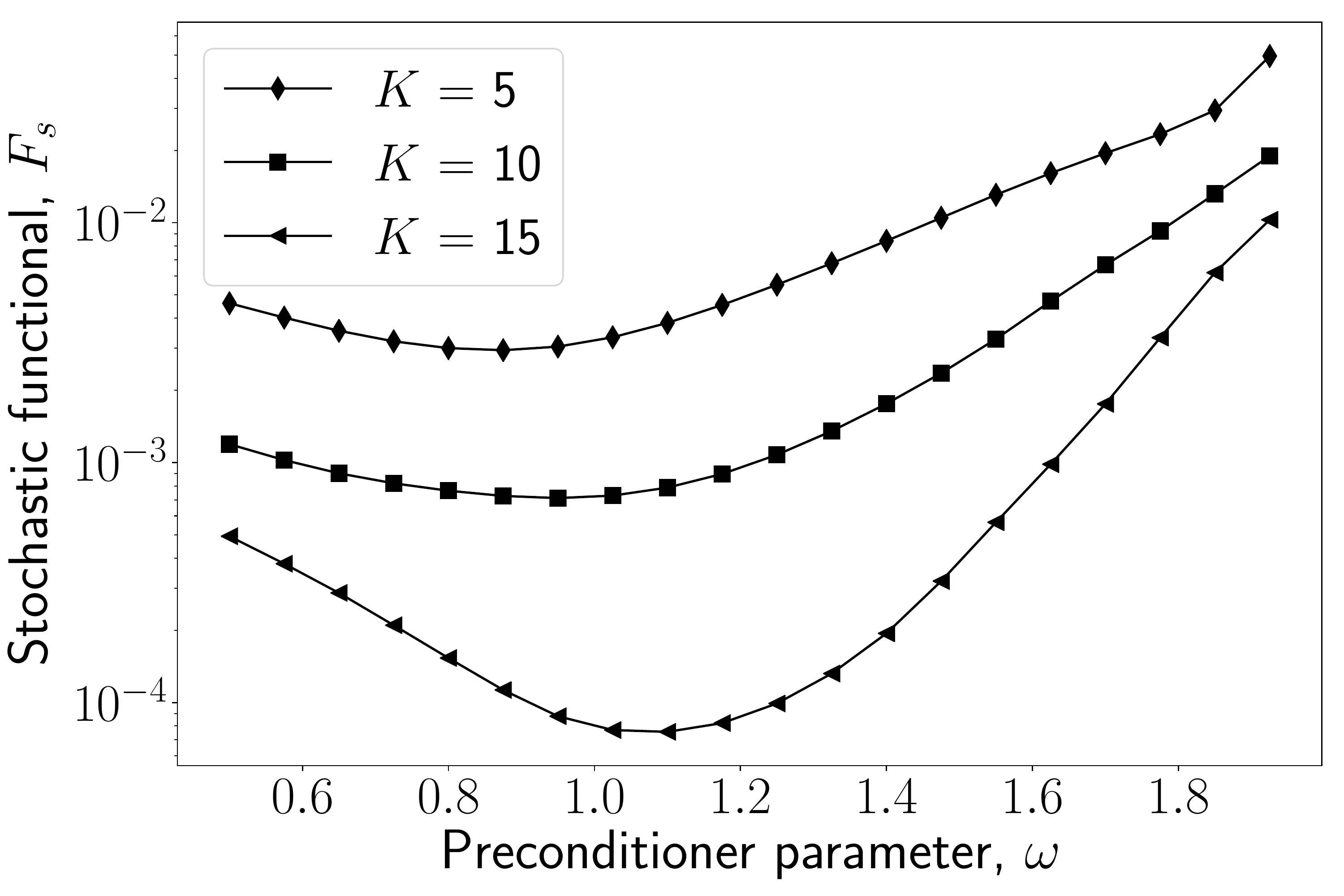}
\caption{}
\label{fig:stoch_bcsstk16_iter}
\end{subfigure}
~
\begin{subfigure}[t]{0.45\textwidth}
\centering
\includegraphics[width=\textwidth]{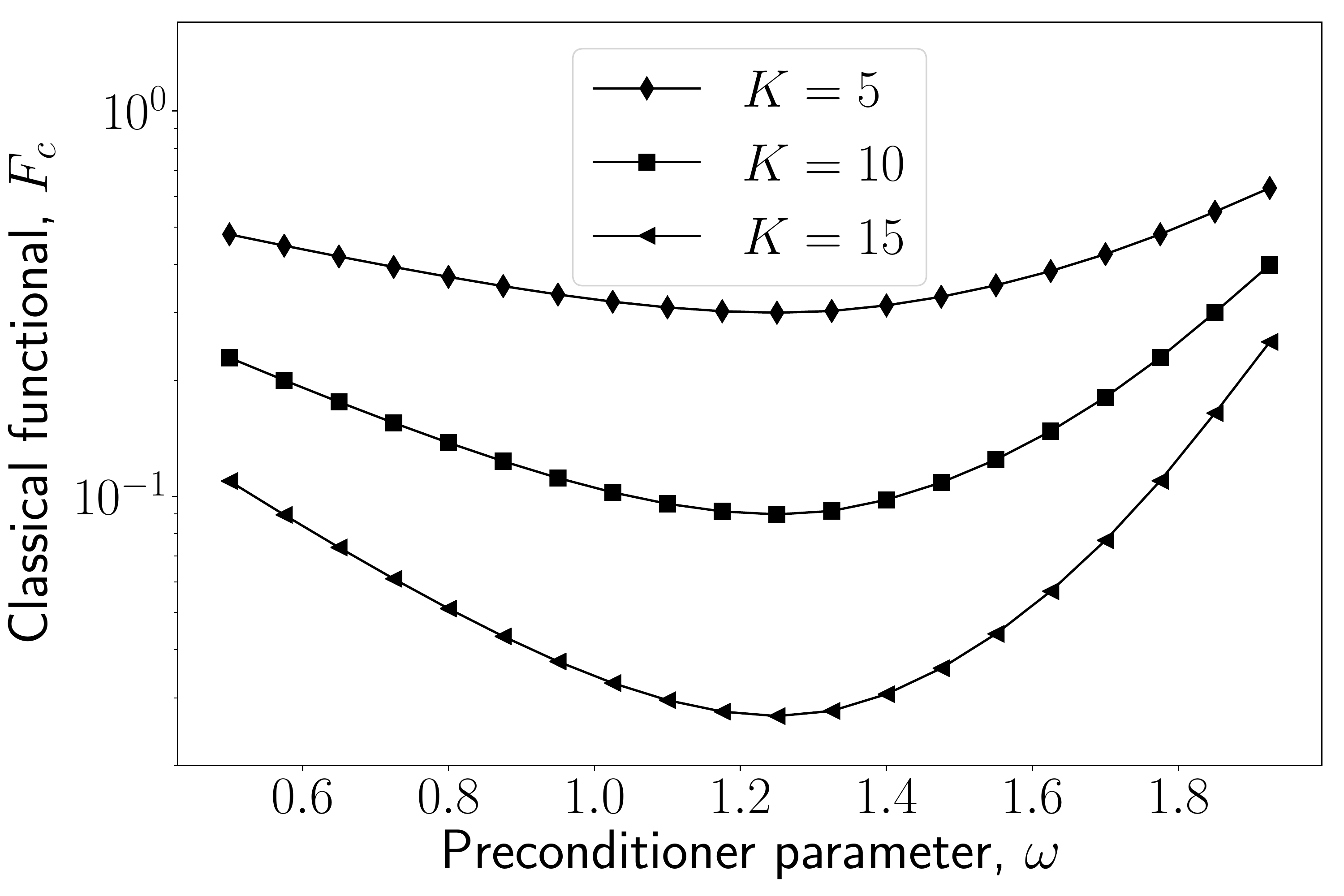}
\caption{}
\label{fig:class_bcsstk16_iter}
\end{subfigure}
\\
\begin{subfigure}[t]{0.45\textwidth}
\centering
\includegraphics[width=\textwidth,height=0.24\textheight]{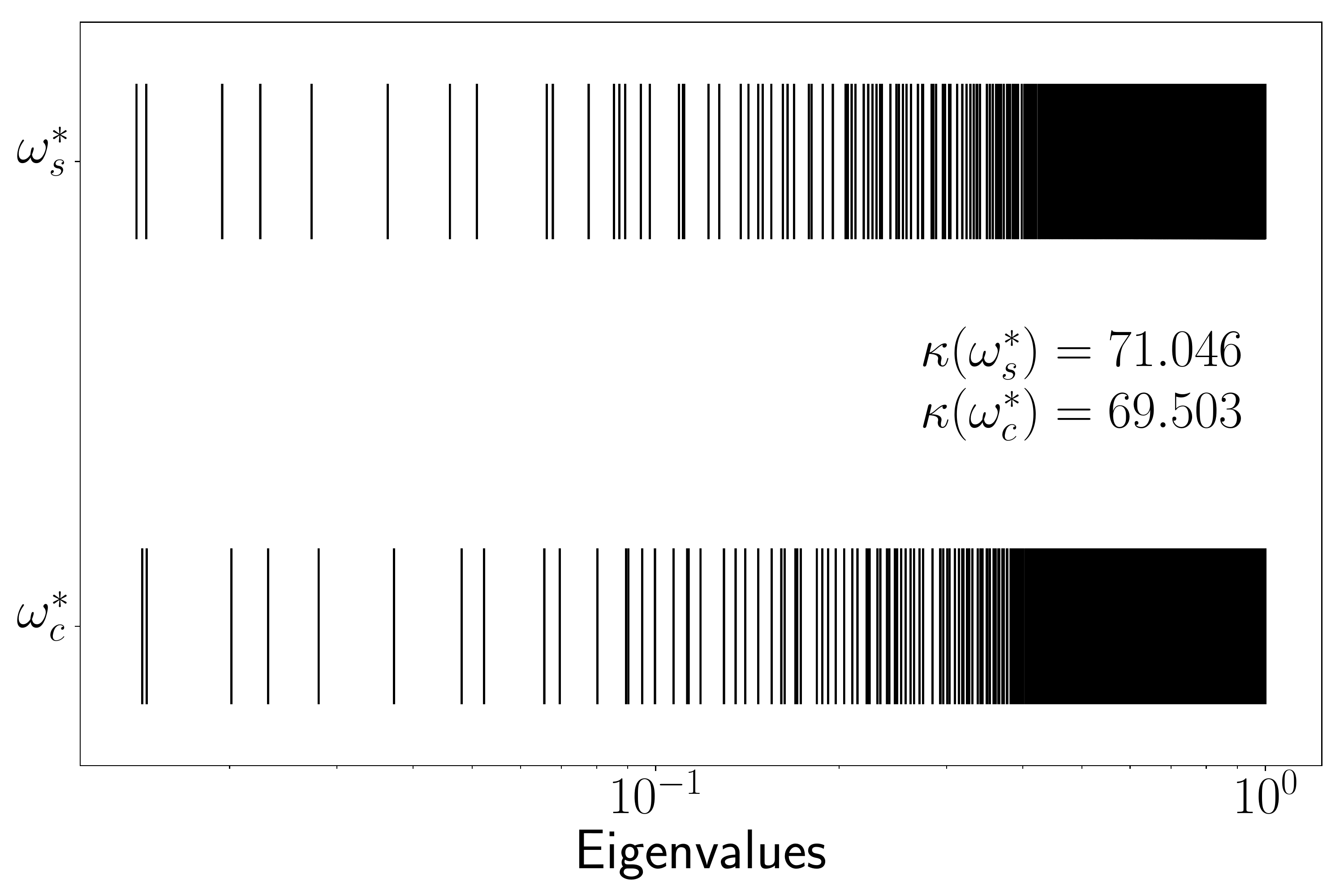}
\caption{}
\label{fig:stoch_class_bcsstk16_eigvals}
\end{subfigure}
~
\begin{subfigure}[t]{0.5\textwidth}
\centering
\includegraphics[width=\textwidth]{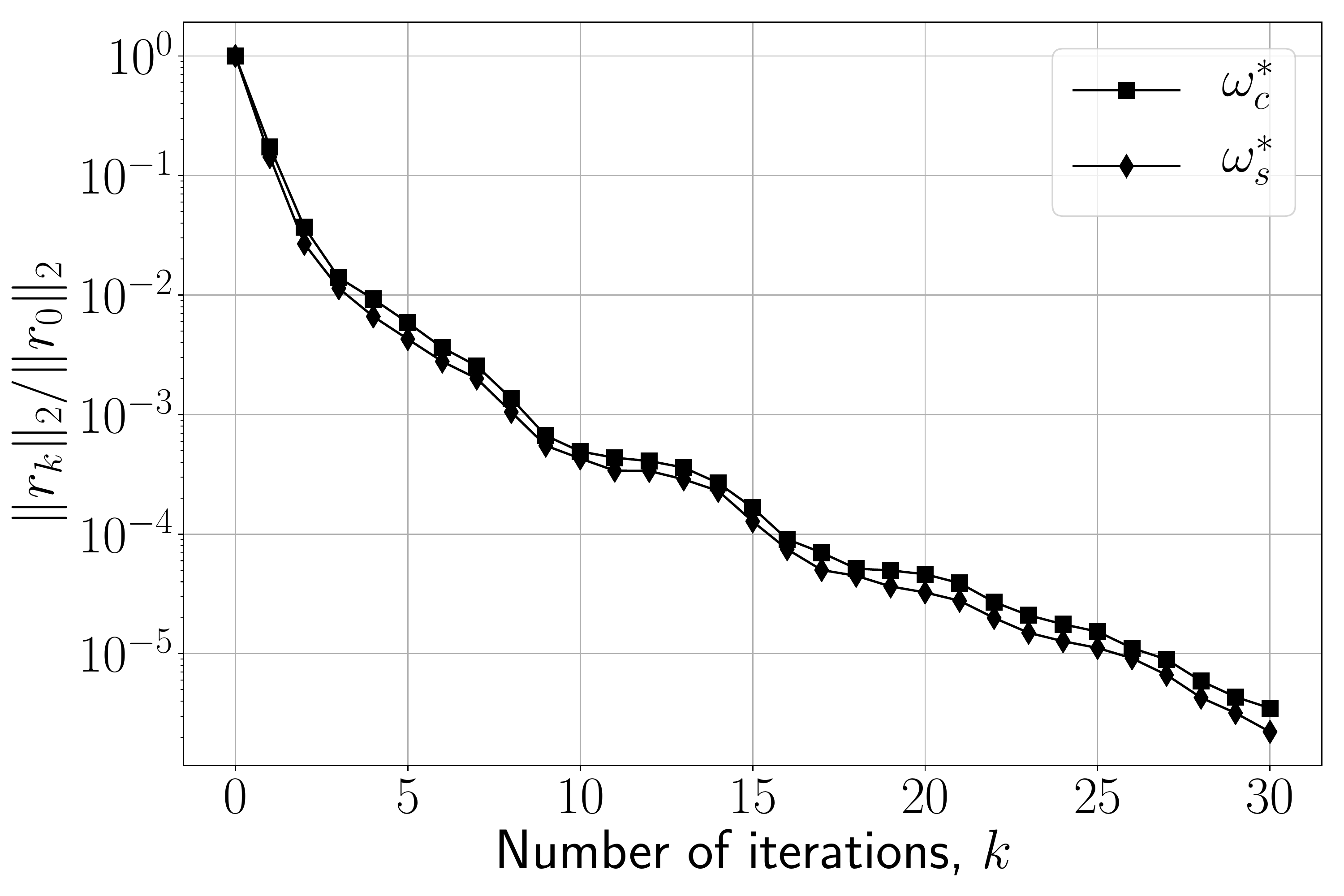}
\caption{}
\label{fig:conv_compare_bcsstk16}
\end{subfigure}
    
    \caption{Test case \texttt{bcsstk16}.
    Plots~(a),(b): dependence of the functionals $F_s$, $F_c$ on the preconditioner parameter $\omega$ for different iteration number $K$.
    Plot~(c): eigenvalues of the preconditioned matrix for the optimal values $\omega_s^*$ (based on~$F_s$) and $\omega_c^*$ (based on~$F_c$).
    Plot~(d): Residual norm convergence of CG preconditioned by SSOR($\omega$) for $\omega_s^*$ and $\omega_c^*$.}
    \label{fig::bcsstk16}
\end{figure}

\section{Conclusion}
\label{sect:concl}
In this paper, a stochastic approach to estimate convergence rate of iterative linear system solvers is presented.
Our estimate, which we call a stochastic 
convergence functional, is essentially based on monitoring the \emph{mean} convergence rate for a number of random initial guess vectors.  
For linear stationary iterative methods it is shown that the stochastic convergence functional coincides with the classical convergence estimate based on the spectral radius of the iteration matrix.  
For the CG method, which is a nonlinear nonstationary method, 
both analysis and experiments suggest that the stochastic convergence functional provides a sharper convergence measure than the classical estimate based on the spectral condition number of the system matrix.
We also show that the new stochastic convergence functional can be used for optimizing parameters in preconditioners for the CG method. 
Numerical tests for the CG method preconditioned by the $\mathrm{RIC}_{\alpha}(0)$ (relaxed incomplete Cholesky factorization with no fill in) and by the SSOR($\omega$) preconditioners are presented.  
The tests demonstrate that the new stochastic functional provides a better means for optimizing the preconditioner parameters than minimizing the spectral condition number.

Simple convergence analysis presented here shows that
the classical convergence estimate based on the 
spectral condition number can be improved for
some initial guess vectors.  An interesting open question remains whether other convergence estimates, in particular,
which demonstrate superlinear convergence, can be 
improved for some initial guess vectors.  We believe that
this is might be true and leave this for future work.

Another interesting extension of this work would be 
precondtioner optimization with respect to different parameters.
This is relevant, for instance, for circulant  preconditioners~\cite{chan1988optimal,oseledets2006unifying,tyrtyshnikov1992optimal}.
In this case some gradient optimization methods in combination with automatic differentiation tools (such as Autograd, Pytorch, etc.) can be successfully used, see our recent work~\cite{katrutsa2017deep}.

Finally, a relevant question is whether our stochastic
optimization procedure can be combined with solving
multiple linear systems by Krylov subspace 
recycling~\cite{Amritkar_ea2015,BennerFeng2011,OConnel2017}.
One could, for example, carry out optimization
based on the given (rather than on random) right hand side vectors,
starting off with an unoptimized preconditioner 
and carrying out optimization ``on the fly''.
We hope to explore this in a future work.

% \todo{MB: some words on future work}
% Here we present optimization of the $\mathrm{RIC}_{\alpha}(0)$ preconditioner which depends on the scalar parameter. 
% More interesting case is the optimization of some preconditioner that depends on many parameters, e.g. circulant preconditioner.
% To optimize it, we have to use gradient-based methods, which require gradient computation of the proposed functional.
% To perform gradient computation, some tools for automatic differentiation (Autograd, Pytorch, etc.) can be used.
% Thus, potential future work is study  natural extension of the proposed approach to more complex preconditioners. 

\bibliographystyle{siamplain}
\bibliography{lib}

\end{document}